\titleformat{\section}{\normalfont\sffamily\Large\bfseries}
  {\thesection}{1em}{}
\titleformat{\subsection}{\normalfont\sffamily\large\bfseries}
  {\thesubsection}{1em}{}
\titleformat{\subsubsection}{\normalfont\sffamily\bfseries}
  {\thesubsubsection}{1em}{}
\renewcommand{\phi}{\varphi}
\newcommand{\phiexact}{\phi_{\text{exact}}}
\DeclareMathOperator{\Tr}{Tr}
\newcommand{\exx}{\mathbf{e}_1}
\newcommand{\eyy}{\mathbf{e}_2}
\newcommand{\ezz}{\mathbf{e}_3}
\newcommand{\mtx}[1]{\bm{\mathsf{#1}}}
\newcommand{\entry}[1]{{\mathsf{#1}}}
\newcommand{\surfdiv}{\nabla_\Gamma \cdot}
\newcommand{\surfgrad}{\nabla_\Gamma}
\newcommand{\surflap}{\Delta_\Gamma}
\newcommand{\bbR}{\mathbb R}
\newcommand{\bn}{\bm n}
\newcommand{\br}{\bm r}
\newcommand{\bx}{\bm x}
\newcommand{\bs}{\bm s}
\newcommand{\bL}{\bm L}
\newcommand{\bB}{\bm B}
\newcommand{\bE}{\bm E}
\newcommand{\bF}{\bm F}
\newcommand{\bH}{\bm H}
\newcommand{\bJ}{\bm J}
\newcommand{\bK}{\bm K}
\newcommand{\bV}{\bm V}
\newcommand{\cI}{\mathcal I}
\newcommand{\cM}{\mathcal M}
\newcommand{\cP}{\mathcal P}
\newcommand{\cA}{\mathcal A}
\newcommand{\cS}{\mathcal S}
\newcommand{\cD}{\mathcal D}
\newcommand{\cW}{\mathcal W}
\newcommand{\npol}{n_{\text{pol}}}
\newcommand{\nquad}{n_{\text{quad}}}
\newcommand{\ntri}{n_{\text{tri}}}
\newcommand{\npts}{n_{\text{pts}}}
\newtheorem{theorem}{Theorem}[section]
\newtheorem{lemma}[theorem]{Lemma}
\newtheorem{corollary}[theorem]{Corollary}
\title{\bf\sffamily Second-kind integral equations for the Laplace-Beltrami
  problem on  surfaces in three dimensions}
\numberwithin{equation}{section}
\author{Michael O'Neil\footnote{Courant Institute,  New York University, New
    York, NY. Email: {\tt oneil@cims.nyu.edu}. Research supported in
  part by the Office of Naval Research under Award N00014-15-1-2669.}}
\date{\today}
\begin{document}

\maketitle 
\begin{abstract}
  The Laplace-Beltrami problem $\surflap \psi = f$ has several
  applications in mathematical physics, differential geometry, machine
  learning, and topology. In this work, we present novel second-kind
  integral equations for its solution 
  which obviate the need for constructing a   suitable parametrix to
  approximate the in-surface Green's function. The resulting integral
  equations are well-conditioned and compatible with standard fast
  multipole methods and iterative linear algebraic solvers, as well as
  more modern fast direct solvers. 
  Using
   layer-potential identities known as Calder\'on projectors, the
   Laplace-Beltrami operator can be pre-conditioned from the left
   and/or right to obtain second-kind integral equations. We
   demonstrate the accuracy and stability of the scheme in several
   numerical examples along surfaces described by curvilinear triangles.
\end{abstract}


\section{Introduction}

For a smooth, closed surface $\Gamma$ embedded in $\bbR^3$, the
Laplace-Beltrami problem
\begin{equation}\label{eq_lapbel}
\surflap \psi = f
\end{equation}
has numerous applications in partial differential
equations~\cite{EpGr,veerapaneni_2011,schwartz_2005},
topology~\cite{reuter_2010}
and differential geometry~\cite{mckean_1967},
shape optimization~\cite{sokolowski_1992},
computer graphics~\cite{chao_2010}, and even machine
learning~\cite{weiss_2009,belkin_2006}.
The main application area 
in mind for this work is that of electromagnetics, where it is often
 useful to partition tangential vector fields (e.g. electric
current)
on surfaces of
arbitrary genus into their Hodge decomposition~\cite{EpGr,EpGrOn}.
The Hodge decomposition is the generalization of the standard Helmholtz
decomposition to multiply-connected domains, and allows for a
component of the vector field which is divergence \emph{and} curl
free.
In particular, any tangential vector field $\bF$ along a smooth,
closed surface $\Gamma$ can be
written as
\begin{equation}
\bF = \surfgrad \alpha  + \bn \times 
\surfgrad \beta  + \bH,
\end{equation}
where $\surfgrad$ denotes the surface gradient, $\alpha$, $\beta$
are functions defined along~$\Gamma$, and $\bn$ is the unit normal
vector.
(In what follows,
we will restrict our 
discussion and applications to smooth vector fields $\bF$.)
The surface divergence
is denoted by $\surfdiv$, and the \emph{surface curl} is given by
$\surfdiv (\bn \times)$ so that
\begin{equation}
\surfdiv \bF = \surflap \alpha, \qquad \surfdiv (\bn \times \bF) =
-\surflap \beta.
\end{equation}
The vector field $\bH$ is the harmonic component of $\bF$, and has
zero divergence and curl:
\begin{equation}
\surfdiv \bH = 0, \qquad \surfdiv (\bn \times \bH) = 0.
\end{equation}
The existence of non-trivial vector 
fields $\bH$ depends on the genus of $\Gamma$:
the dimension of this harmonic subspace is $2p$, with $p$
denoting the genus of~$\Gamma$.  
The exact usefulness of this decomposition with
regard to problems in electromagnetics is discussed in detail
in~\cite{EpGr,EpGrOn,EpGrOn2,dai_2014}. To summarize one particular
application, electromagnetic fields
in the exterior of a perfect electric conductor (PEC)
with smooth boundary $\Gamma$
can be
represented using what are known as \emph{generalized Debye
  sources}, denoted below by $r$, $q$.
After sufficient scaling,
the time-harmonic Maxwell equations with wavenumber $k$ in a
source and current free region can be written as~\cite{papas}:
\begin{equation}
  \begin{aligned}\label{eq_maxwell}
  \nabla \times \bE &= ik \bB, &\qquad \nabla \times \bB &= -ik \bE,\\
  \nabla \cdot \bE &= 0, &   \nabla \cdot \bB &= 0.
  \end{aligned}
\end{equation}
With the above standardization, the 
electric and magnetic fields in the exterior of a PEC can be
represented
as
\begin{equation}\label{eq_gendebye}
\begin{aligned}
  \bE &= ik \cS_k \bJ - \nabla \cS_k r - \nabla \times \cS_k \bK, \\
  \bB &= ik \cS_k \bK - \nabla \cS_k q + \nabla \times \cS_k \bJ,
\end{aligned}
\end{equation}
where $k$ is the wavenumber of the fields
 and $\cS_k$ is the single-layer Helmholtz
operator:
\begin{equation}
\cS_k f (\bx) =  \int_\Gamma \frac{e^{ik|\bx-\bx'|}}{4\pi |\bx-\bx'|} 
\, f(\bx') \, da(\bx') .
\end{equation} 
The surface vector fields $\bJ$ and $\bK$ are defined using $r$, $q$
in Hodge form
so that $\bE$, $\bB$ in~\eqref{eq_gendebye}
automatically satisfy Maxwell's equations:
\begin{equation}
\begin{aligned}
\bJ &=  ik \left(\surfgrad \surflap^{-1} r - \bn \times \surfgrad
  \surflap^{-1} q  \right) + \bJ_H ,\\
\bK &= \bn \times \bJ,
\end{aligned}
\end{equation}
where $\bJ_H$ is a harmonic vector field along $\Gamma$.
Note that the construction of $\bJ$ and $\bK$ requires the application
of $\surflap^{-1}$ (or, equivalently, the solution of $\surflap \psi = r$).

Despite the many applications of the Laplace-Beltrami
problem in widely varying
domains, the literature surrounding its numerical 
solution is rather limited.
Methods which directly discretize the differential operator are
usually based on finite elements or finite
differences~\cite{bonito_2013,
  hansbo_2016,demlow_2007,dziuk_2013,frittelli_2016}.
Alternatively, there are a
few methods~\cite{kropinski_2014_fast,kropinski_2016_integral}
relevant
to problems along surfaces in three dimensions which
re-formulate the problem in integral form using a parametrix of
$\log$-type, which to leading order, is the Green's function for
$\surflap$ on surfaces embedded in three dimensions.
Integral equations on surfaces in
three dimensions is a well-studied field~\cite{kress_2014}, but usually
the kernels involved are Green's functions corresponding to
constant-coefficient PDEs in
three dimensions, and not variable coefficient ones
along two-dimensional surfaces. Outside of
special-case geometries, novel quadrature rules and fast algorithms
would need to be constructed in order to rapidly solve integral
equations on surfaces in three dimensions with logarithmically-singular
kernels.

If one is willing to obtain or construct a signed distance function
(defined in the volume) to the surface~$\Gamma$, then various
level-set methods become options for solving surface PDEs, including
the Laplace-Beltrami problem. This approach was taken
in~\cite{bertalmio2001,greer2006,chen2015,macdonald2009,macdonald2008,ruuth2008} for
2nd-order and 4th-order surface diffusion-type problems, including the
Cahn-Hilliard equation. These schemes are based on using an embedded
finite difference scheme (in the volume) to discretize an extension of
the surface
PDE~(often using formulas such as~\eqref{eq_vollap}, below).
While very general (and very efficient yielding sparse matrices
to invert), and
certainly applicable to a wide range of surface PDEs, the
solvers associated with level set methods and the closest point method
can suffer from ill-conditioning in the presence of adaptive
discretizations (possibly needed in order to resolve sharp geometric
features or high-bandwidth right-hand sides).
Furthermore, methods based on such volumetric extensions inherently
require extending the right-hand side from the surface to the volume,
which is a notoriously difficult task to do with high-order
accuracy~\cite{askham2017}.

With this in mind, in this work we introduce special-purpose,
second-kind boundary
integral equations obtained by applying left- and
right-preconditioners to the Laplace-Beltrami problem that can be used
for its efficient solution. The resulting integral equations rely on
several Calder\'on identities for harmonic layer potentials, and
contain integral operators whose kernel is the Green's function for
the \emph{three-dimensional Laplacian}. These features allow for the
immediate use of fast algorithms, such as fast multipole methods
(FMMs)~\cite{greengard-1997}, and standard quadrature methods for
weakly singular integrals along
surfaces~\cite{bremer_2012c,bremer_2013}.  We will demonstrate the
effectiveness of these integral equations with several numerical
examples.

The paper is organized as follows:
In Section~\ref{sec_intro} we introduce the Laplace-Beltrami operator
and provide some standard relationships with the three-dimensional
Laplacian.
In Section~\ref{sec_integral} we derive novel second-kind integral
equations useful in solving the Laplace-Beltrami problem. In
Section~\ref{sec_surfaces} we describe the numerical discretization of
arbitrary curvilinear surfaces and derive various quantities from 
differential geometry. Section~\ref{sec_numerical} contains
several examples of the resulting solver, and finally in
Section~\ref{sec_conclusions} , we discuss
advantages, drawbacks, and future improvements  of our scheme.

\section{The Laplace-Beltrami operator}
\label{sec_intro}

The Laplace-Beltrami operators, also known as the surface Laplacian,
is the generalization of the Laplace operator to general curvilinear
coordinates. 
In what follows, we will always assume that the surface $\Gamma$ has
at least two continuous derivatives, and in practice, we provide examples
which have higher degrees of smoothness.
For a surface $\Gamma$ parameterized (at least locally) by a
smooth function $\bx: \bbR^2 \to \bbR^3$,
\begin{equation}\label{eq_triparam}
\bx(u,v) = x_1(u,v) \, \exx + x_2(u,v) \, \eyy + x_3(u,v) \, \ezz,
\end{equation}
where $\exx$, $\eyy$, $\ezz$ is the elementary orthonormal basis for
$\bbR^3$, the metric tensor is given by
\begin{equation}
g = \begin{pmatrix}
\bx_u  \cdot \bx_u & 
\bx_u  \cdot \bx_v \\
\bx_v  \cdot \bx_u & 
\bx_v  \cdot \bx_v
\end{pmatrix}.
\end{equation}
The determinant of the metric tensor will be denoted as $\det g =
|g|$, and the components of $g$ as:
\begin{equation}
\begin{aligned}
 g_{uu} &= \bx_u \cdot \bx_u, &\qquad g_{uv} &= \bx_u \cdot \bx_v, \\
 g_{vu} &= \bx_v \cdot \bx_u, &\quad g_{vv} &= \bx_v \cdot \bx_v.
\end{aligned}
\end{equation}
Here, we use the abbreviations $\bx_u = \partial \bx/\partial u$ and 
$\bx_v = \partial \bx/\partial v$.
The unit \emph{outward} normal vector is given by
\begin{equation}
\bn(\bx) = \frac{\bx_u \times \bx_v}{|\bx_u \times \bx_v|}
\end{equation}
so that the vectors $\bx_u$, $\bx_v$, and $\bn$ form a right-handed
system of coordinates.
The surface divergence and gradient are then~\cite{frankel,nedelec}:
\begin{equation}
  \begin{aligned}
    \surfdiv \bF &= \frac{1}{\sqrt{ |g|}} \left(
      \frac{\partial}{\partial u} \left( \sqrt{|g|} \, F^u  \right)+ 
      \frac{\partial}{\partial v} \left( \sqrt{|g|} \, F^v \right) 
    \right), \\
    \surfgrad \psi 
    &=  \left( g^{uu}
      \frac{\partial \psi}{\partial  u}
      + g^{uv} \frac{\partial \psi}{\partial v} \right)  \bx_u + 
    \left( g^{vu} \frac{\partial \psi}{\partial u} +
      g^{vv} \frac{\partial \psi}{\partial v}
        \right) \bx_v,    
  \end{aligned}
\end{equation}
where $\psi = \psi(u,v)$ is a scalar function along $\Gamma$,
$\bF = \bF(u,v)$ is a tangential vector field defined with respect to 
the tangent vectors $\bx_u$ and $\bx_v$:
\begin{equation}
  \bF(u,v) =  F^u(u,v) \, \bx_u + F^v(u,v) \, \bx_v ,
\end{equation}
and the coefficients~$g^{ij}$ are the components of the inverse
of~$g$:
\begin{equation}
  g^{-1} =  \begin{pmatrix}
g^{uu} &  g^{uv} \\
g^{vu} &  g^{vv}
\end{pmatrix}.
\end{equation}
If the surface~$\Gamma$ is closed, 
the surface gradient and divergence yield the integration by parts
formula:
\begin{equation}
\int_\Gamma \surfgrad \psi \cdot \bF \, da = -\int_\Gamma \psi \, 
\surfdiv \bF \, da, 
\end{equation}
where $da = \sqrt{|g|} \, du \, dv$ is the surface-area differential.
The Laplace-Beltrami operator is then defined as~$\surflap =
\surfdiv\surfgrad$, and given explicitly as:
\begin{equation}
  \surflap \psi = \frac{1}{\sqrt{|g|}} \left( 
    \frac{\partial}{\partial u} \sqrt{|g|} 
    \left( g^{uu} \, \frac{\partial\psi }{\partial u} + 
    g^{uv} \, \frac{\partial\psi }{\partial v} \right) + 
  \frac{\partial}{\partial v} \sqrt{|g|} \left( 
    g^{vu} \, \frac{\partial\psi }{\partial u} + 
    g^{vv} \, \frac{\partial\psi }{\partial v} \right)
\right).
\end{equation}
From the previous definition of the Laplace-Beltrami operator, it is
clear that in general, the problem of solving $\surflap \psi = f$
requires solving a second-order variable-coefficient PDE in the
variables $u$, $v$. Solutions of this variable coefficient PDE can be
obtained via pseudo-spectral methods if a tractable parameterization
of the boundary can be obtained~\cite{imbertgerard_2017} or by finite
differences or finite element methods~\cite{bonito_2013,dziuk_2013}. In each of
these cases, complicated geometry plays a key role.
The goal of this paper will be to avoid designing methods for variable
coefficient PDEs along surfaces, and instead obtain an equivalent
well-conditioned linear integral equation compatible with existing
fast algorithms that can be used to solve the Laplace-Beltrami problem.

Alternatively, for functions defined not just along~$\Gamma$, but in a
neighborhood of~$\Gamma$, the surface Laplacian can be written in terms of the
standard three-dimensional Laplacian plus correction terms
which remove differential contributions in the normal direction.
See, for example,~\cite{nedelec} for a derivation of the
following lemma.

\begin{lemma}\label{lem_lap}
  For an object~$\Omega$ in three dimensions with smooth boundary
  given by~$\Gamma$ and a twice differentiable function
  $\psi = \psi(\bx)$ defined in a neighborhood of $\Gamma$, we have
  that
\begin{equation}\label{eq_vollap}
  \Delta \psi = \surflap \psi + 2H 
  \frac{\partial \psi}{\partial n} +
  \frac{\partial^2 \psi}{\partial n^2},
\end{equation}
where $H$ is the signed mean curvature,
\begin{equation}
H = \frac{\nabla \cdot \bn }{2},
\end{equation}
and
$\partial/\partial n$ denotes differentiation in the direction normal
(outward) to the boundary of $\Omega$:
\begin{equation}
\frac{\partial \psi}{\partial n} = \bn \cdot \nabla \psi.
\end{equation}
\end{lemma}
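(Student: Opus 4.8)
The plan is to introduce normal (Fermi) coordinates in a tubular neighborhood of $\Gamma$ and thereby reduce \eqref{eq_vollap} to the coordinate formula for the Laplacian already recorded above. Fix a local parametrization $\bx(u,v)$ of $\Gamma$ as in \eqref{eq_triparam} and set
\begin{equation*}
\Phi(u,v,w) = \bx(u,v) + w\,\bn(u,v),
\end{equation*}
which for $|w|$ small is a diffeomorphism from a slab onto a neighborhood of $\Gamma$; the slice $\{w=\text{const}\}$ is the parallel surface at signed distance $w$, and by construction the coordinate vector field $\partial/\partial w$ is the unit normal field, so $\partial_w\psi = \bn\cdot\nabla\psi = \partial\psi/\partial n$ and $\partial_w^2\psi = \partial^2\psi/\partial n^2$ throughout the neighborhood. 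The first step is to record $\Phi_u = \bx_u + w\,\bn_u$, $\Phi_v = \bx_v + w\,\bn_v$, $\Phi_w = \bn$, and to observe that the Euclidean metric, written in the coordinates $(u,v,w)$ as the matrix $G$ with entries $G_{ij} = \Phi_i\cdot\Phi_j$, is block diagonal: $G_{ww} = |\bn|^2 = 1$, and $G_{uw} = G_{vw} = 0$ because $\bn\cdot\bx_u = \bn\cdot\bx_v = 0$ and $\bn\cdot\bn_u = \bn\cdot\bn_v = 0$ (the latter from differentiating $|\bn|^2 = 1$). The upper-left $2\times 2$ block is thus the induced metric $g(w)$ of the parallel surface, with $g(0) = g$.

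Next I would substitute this into the standard expression $\Delta\psi = |G|^{-1/2}\sum_{i,j}\partial_i\big(|G|^{1/2}\,G^{ij}\,\partial_j\psi\big)$. Block diagonality gives $|G| = |g(w)|$ and splits the sum into a tangential part involving only $\partial_u,\partial_v$ and a normal part $|g(w)|^{-1/2}\partial_w\big(|g(w)|^{1/2}\partial_w\psi\big)$. Since $\partial_u$ and $\partial_v$ commute with restriction to $\{w=0\}$, the tangential part evaluated at $w=0$ reproduces verbatim the formula for $\surflap$ applied to $\psi|_\Gamma$ that appears earlier in this section. The normal part expands as
\begin{equation*}
\frac{\partial^2\psi}{\partial n^2} + \Big(\partial_w \ln\sqrt{|g(w)|}\,\Big)\,\frac{\partial\psi}{\partial n}.
\end{equation*}

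It then remains to identify $\partial_w\ln\sqrt{|g(w)|}$, restricted to $\Gamma$, with $2H = \nabla\cdot\bn$. This is immediate in these coordinates: the unit normal field is the coordinate vector $\partial/\partial w$, i.e. it has components $(0,0,1)$, so the divergence formula yields $\nabla\cdot\bn = |G|^{-1/2}\partial_w\big(|G|^{1/2}\big) = \partial_w\ln\sqrt{|g(w)|}$, which equals $2H$ on $\Gamma$ by the definition of the mean curvature. Collecting the three pieces gives $\Delta\psi = \surflap\psi + 2H\,\partial\psi/\partial n + \partial^2\psi/\partial n^2$, which is \eqref{eq_vollap}. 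I expect the only genuine bookkeeping to be the block-diagonality step — checking that the mixed metric entries vanish for every $w$ and that the tangential block is exactly the parallel-surface metric — after which the identification of the coefficient $2H$ requires no further curvature computation because $\bn$ is itself a coordinate vector field. (A coordinate-free alternative would write $\nabla\psi = \surfgrad\psi + (\partial\psi/\partial n)\,\bn$ for the normal extension of $\bn$, use $\nabla_\bn\bn = 0$ to get $\nabla\cdot\big((\partial\psi/\partial n)\bn\big) = \partial^2\psi/\partial n^2 + 2H\,\partial\psi/\partial n$, and invoke the fact that the three-dimensional divergence of a field tangent to the level surfaces agrees with their surface divergence to handle $\nabla\cdot(\surfgrad\psi)$; this trades the coordinate computation for that last fact.)
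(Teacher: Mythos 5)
Your argument is correct: the paper does not prove this lemma itself but defers to the cited reference (N\'ed\'elec), and the derivation given there is essentially the one you propose --- introduce the tubular-neighborhood map $\bx(u,v)+w\,\bn(u,v)$, observe that the Euclidean metric is block diagonal with $G_{ww}=1$ and tangential block equal to the parallel-surface metric $g(w)$, and read off the three terms from the coordinate formula for $\Delta$. Your identification of the coefficient of $\partial\psi/\partial n$ as $\partial_w\ln\sqrt{|g(w)|}=\nabla\cdot\bn=2H$ is clean and matches the paper's normalization (unit sphere has $H=1$), so no further comparison is needed.
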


Note that in the previous lemma, we have defined the mean curvature
$H$ so that the unit sphere has $H = 1$, assuming that the outward
normal is given by $\bn = \br$.
The following corollary addresses the case in which
the function $\psi$ is harmonic.
\begin{corollary}
If the function $\psi$ defined in Lemma~\ref{lem_lap} is also harmonic
in the neighborhood of $\Gamma$, then we have that
\begin{equation}
\surflap \psi = -2H  \frac{\partial \psi}{\partial n} - 
  \frac{\partial^2 \psi}{\partial n^2}.
\end{equation}
\end{corollary}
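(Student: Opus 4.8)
The plan is to read the corollary off directly from Lemma~\ref{lem_lap}. Since $\psi$ is assumed harmonic in a neighborhood of $\Gamma$, the ambient Laplacian vanishes there, $\Delta\psi = 0$. Substituting this into the pointwise decomposition~\eqref{eq_vollap}, evaluated on $\Gamma$, gives
\begin{equation}
0 = \surflap \psi + 2H\,\frac{\partial \psi}{\partial n} + \frac{\partial^2 \psi}{\partial n^2},
\end{equation}
and rearranging yields the claimed identity. The only point worth a sentence of care is that each term is meaningful: $\surflap\psi$ depends only on the restriction $\psi|_\Gamma$, whereas the normal derivatives $\partial\psi/\partial n$ and $\partial^2\psi/\partial n^2$ require $\psi$ to be defined and twice differentiable in a (one- or two-sided) neighborhood of $\Gamma$ — which is precisely the hypothesis already in force in Lemma~\ref{lem_lap}. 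So no extra regularity or geometric input is needed, and there is no real obstacle; the statement is a one-line consequence of the lemma.

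If instead one wanted a proof that does not invoke Lemma~\ref{lem_lap}, the route would be to introduce Fermi (signed-distance) coordinates, parametrizing a tubular neighborhood of $\Gamma$ by a point of $\Gamma$ together with the signed normal distance $t$, expand the Euclidean Laplacian as $\Delta = \Delta_{\Gamma_t} + \kappa(\cdot,t)\,\partial_t + \partial_t^2$ over the family of parallel surfaces $\Gamma_t$, observe that $\kappa(\cdot,0) = 2H$ with the sign convention for the mean curvature used here, and then impose $\Delta\psi = 0$ at $t=0$. The mild bookkeeping there — checking that the first-order coefficient in the normal direction is exactly twice the mean curvature of $\Gamma$ and that no additional cross terms survive at $t=0$ — is the one place a careful computation is required, but since Lemma~\ref{lem_lap} already packages exactly this, reproducing it would be redundant and I would simply cite the lemma.
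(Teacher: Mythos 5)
Your proof is correct and is exactly the argument the paper intends (the corollary is stated as an immediate consequence of Lemma~\ref{lem_lap}): set $\Delta\psi=0$ in~\eqref{eq_vollap} and rearrange. The remark about which terms need $\psi$ defined off $\Gamma$ is accurate but not essential; no further justification is required.
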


Finally, we conclude this section with a brief discussion
of the invertibility of the Laplace-Beltrami operator.
Clearly, $\surflap$ contains a
 nullspace of dimension one, namely that of constant 
functions along~$\Gamma$. Therefore, in order to formulate a
well-posed Laplace-Beltrami problem, we must restrict
the operator, and its inverse, to mean-zero functions (or enforce some
other constraint).
The following lemma is a standard result in Hodge theory, and
discussed in more detail in~\cite{nedelec, EpGr}.

\begin{lemma}
For smooth closed boundaries~$\Gamma$, the Laplace-Beltrami operator
is uniquely invertible as a map from $\cM_0$ to $\cM_0$, the space of
mean-zero functions defined on~$\Gamma$:
\begin{equation}
\cM_0 = \left\{ \psi : \int_\Gamma \psi \, da = 0 \right\}.
\end{equation}
\end{lemma}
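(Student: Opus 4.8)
The plan is to obtain the statement from the standard spectral/variational theory of elliptic operators on compact manifolds, bootstrapped from the integration-by-parts identity already recorded above. First I would record that $\surflap$ is symmetric and negative semi-definite on smooth functions: applying the integration-by-parts formula with $\bF = \surfgrad\psi$ yields
\begin{equation}
  \int_\Gamma \varphi \, \surflap \psi \, da
    = -\int_\Gamma \surfgrad \varphi \cdot \surfgrad \psi \, da
    = \int_\Gamma \psi \, \surflap \varphi \, da ,
\end{equation}
and in particular $\int_\Gamma \psi \, \surflap \psi \, da = -\int_\Gamma |\surfgrad \psi|^2 \, da \le 0$. Taking $\varphi \equiv 1$ in the same identity gives $\int_\Gamma \surflap \psi \, da = 0$, so $\surflap$ does map (smooth) functions into $\cM_0$, which is what makes the stated restriction to $\cM_0$ meaningful.

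Next I would identify the nullspace. If $\surflap \psi = 0$, then the displayed identity with $\varphi = \psi$ forces $\int_\Gamma |\surfgrad \psi|^2 \, da = 0$, hence $\surfgrad \psi \equiv 0$; since $\Gamma$ is a smooth closed surface (and, as is implicit throughout this paper once one speaks of its genus, connected), this means $\psi$ is constant. Thus $\ker \surflap$ is exactly the one-dimensional space of constants, which intersects $\cM_0$ only in $\{0\}$, giving injectivity of $\surflap|_{\cM_0}$.

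For surjectivity onto $\cM_0$ I would use the variational framework. Equip $\cM_0 \cap H^1(\Gamma)$ with the bilinear form $a(\varphi,\psi) = \int_\Gamma \surfgrad \varphi \cdot \surfgrad \psi \, da$; the Poincar\'e inequality on the compact manifold $\Gamma$ makes $a$ coercive on mean-zero functions, so Lax--Milgram produces, for each $f \in \cM_0$, a unique $\psi \in \cM_0 \cap H^1(\Gamma)$ with $a(\psi,\varphi) = -\int_\Gamma f \varphi \, da$ for all test $\varphi$; elliptic regularity on $\Gamma$ then lifts $\psi$ to the smoothness of $f$ and $\Gamma$, so that $\surflap \psi = f$ in the classical sense. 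Equivalently, one may simply quote that the self-adjoint elliptic operator $\surflap$ on the compact manifold $\Gamma$ has discrete spectrum $0 = \lambda_0 < \lambda_1 \le \lambda_2 \le \cdots \to \infty$ with an $L^2(\Gamma)$-orthonormal eigenbasis $\{\varphi_j\}$, whence $\cM_0 = \overline{\operatorname{span}}\{\varphi_j : j \ge 1\}$ and $\surflap$ restricted there is inverted termwise by $\varphi_j \mapsto -\lambda_j^{-1} \varphi_j$.

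The one point requiring care — and the step I would dwell on — is fixing the functional-analytic setting: the lemma reads most cleanly on smooth (or $H^s$) mean-zero functions, and one must confirm (i) the spectral gap $\lambda_1 > 0$, i.e. the Poincar\'e inequality, which is exactly connectedness plus compactness of $\Gamma$, and (ii) that the weak solution furnished by Lax--Milgram is a genuine classical solution, which follows from standard interior elliptic regularity for $\surflap$ read through the smooth local charts $\bx(u,v)$. Both are routine for $C^2$ (here smoother) closed surfaces, so I would keep this brief and defer to the references already cited (\cite{nedelec, EpGr}).
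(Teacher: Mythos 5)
Your proof is correct. The paper itself offers no argument for this lemma, simply citing it as a standard result from \cite{nedelec, EpGr}, and your variational/spectral argument (integration by parts for symmetry and negative semi-definiteness, constants as the kernel, Lax--Milgram with the Poincar\'e inequality for surjectivity, elliptic regularity to upgrade the weak solution) is precisely the standard proof those references supply. Your observation that connectedness of $\Gamma$ is needed for the kernel to be exactly one-dimensional --- and is implicit in the paper's discussion of genus --- is a worthwhile point that the paper glosses over.
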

That is to say, for a function $f \in \cM_0$, there is a unique
twice-differentiable $\psi \in \cM_0$ such that $\surflap \psi = f$.
In what follows in this section and all subsequent ones, we will
assume that the right hand side of the Laplace-Beltrami equation, $f$,
is a continuous function in $ L^2(\Gamma)$ in order to avoid a
discussion of singular
layer potential densities.
We now have that the
following PDE + constraint is a well-posed system:
\begin{equation}
  \begin{aligned}
    \surflap \psi &= f \qquad \text{on }\Gamma,\\
    \int_\Gamma \psi &= 0.
  \end{aligned}
\end{equation}
Given these facts regarding the Laplace-Beltrami operator, we now turn
to deriving integral equations for its solution.

\section{Integral equation formulations}
\label{sec_integral}

In this section, we derive second-kind integral equations which can be
used to solve the Laplace-Beltrami problem along smooth surfaces in
three dimensions.
These integral equations  rely only on standard layer
potentials, their normal derivatives, and local curvature information
of~$\Gamma$.
The resulting solvers are immediately compatible with
standard fast algorithms and iterative solvers (e.g. FMMs and 
GMRES~\cite{saad-1986}).
The main working tool of the following
derivation will be what are known as \emph{Calder\'on identities},
which we discuss now.

\subsection{Calder\'on projectors}
\label{sec_calderon}

In order to derive well-conditioned second-kind integral equations for
the Laplce-Beltrami equation along surfaces, it is useful to
precondition equation~\eqref{eq_lapbel} (on both the left and the
right)
and invoke what are known as Calder\'on relations (also known as
identities or projections) to rewrite the resulting operator in 
diagonal and compact terms.
To this end, we briefly provide several Calder\'on identities that
will be useful. These identities, and simple proofs using only
standard Green's identities, can be found in~\cite{nedelec}.

Let us denote by $G$ the Green's function for the Laplace equation in
three dimensions:
\begin{equation}
G(\bx-\bx') = \frac{1}{4\pi |\bx - \bx'|}.
  \end{equation}
This Green's function satisfies $\Delta_{\bx} G(\bx-\bx') =
-\delta(\bx-\bx')$, where $\delta$ is the Dirac delta function defined
in the proper sense of distributions~\cite{folland_1995}, and
$\Delta_{\bx}$ denotes the Laplacian applied in the $\bx$ variable.
For $\bx \notin \Gamma$, we next denote by
$\cS$ and $\cD$ the standard single- and double-layer
potential operators 
for Laplace potentials, given by:
\begin{equation}\label{eq_singdoub}
\cS\sigma(\bx) = \int_\Gamma G(\bx-\bx') \, \sigma(\bx')
\, da(\bx'), 
\qquad  \cD\sigma(\bx) = \int_\Gamma \left( \frac{\partial }{\partial n'}
G(\bx-\bx') \right)  \sigma(\bx') \, da(\bx'),
\end{equation}
with $\partial/\partial n' = \bn' \cdot \nabla_{\bx'}$
denoting differentiation in the direction
normal to the surface at $\bx'$.
The functions $\cS \sigma$ and $\cD \sigma$ define smooth functions up
to $\Gamma$; $\cS\sigma$ is continuous across the boundary $\Gamma$,
while $\cD\sigma$ has a jump of size $\sigma$ across $\Gamma$. In
particular, for $\bx \in \Gamma$
\begin{equation}
    \lim_{h \to 0^\pm} \cD\sigma (\bx + h\bn) = \pm \frac{\sigma(\bx)}{2}
    + \cD\sigma(\bx).
\end{equation}
One-sided normal derivatives of the layer potentials $\cS\sigma$ and
$\cD\sigma$ can also be taken. For $\bx \in \Gamma$, we have
\begin{equation}
  \begin{aligned}
  \lim_{h \to 0^\pm} \frac{\partial}{\partial n}
     \cS\sigma (\bx + h\bn) &= \mp \frac{\sigma(\bx)}{2}
    + \cS'\sigma(\bx), \\
  \lim_{h \to 0^\pm} \frac{\partial}{\partial n}
     \cD\sigma (\bx + h\bn) &= \cD'\sigma(\bx),
  \end{aligned}
\end{equation}
where the layer potential operators $\cS'$ and $\cD'$, as maps from
$\Gamma \to \Gamma$, are given by:
\begin{equation}
\cS'\sigma(\bx) = \int_\Gamma \left( \frac{\partial }{\partial n}
G(\bx-\bx') \right)  \sigma(\bx')
\, da(\bx'), 
\qquad  \cD'\sigma(\bx) = \int_\Gamma \left( \frac{\partial^2 }
  {\partial n \, \partial n'}
G(\bx-\bx') \right)  \sigma(\bx') \, da(\bx').
\end{equation}
The function $\cD'\sigma$ is continuous across the interface, while
$\cS\sigma$ has a jump in its normal derivative.
The kernel in $\cD'$ is, however, not
integrable and therefore the integral is interpreted in Hadamard
finite-parts~\cite{nedelec,kress_2014,colton_kress}.

Next, define the matrix $\cP$ of operators from $\Gamma \times \Gamma
\to \Gamma \times \Gamma$ by
\begin{equation}
\cP = \begin{pmatrix}
\cD & \cS \\
-\cD' & -\cS'
\end{pmatrix}.
\end{equation}
This matrix has the projector-like quality (on $\Gamma$) that
$\cP^2 = \cI/4$, where $\cI$ is the identity operator. This
can be proven using a straightforward application
of on-surface Green's identities~\cite{nedelec}.
We then have the four following explicit relationships, known as
\emph{Calder\'on identities}:
\begin{equation}\label{eq_cal}
\begin{aligned}
  \cD \cD - \cS \cD' &= \frac{\cI}{4}, \qquad & \cD \cS &= \cS \cS', \\
\cS' \cS' - \cD' \cS &= 
\frac{\cI}{4},  & \cD' \cD &= \cS' \cD'.
\end{aligned}
\end{equation}
These identities are very useful in constructing preconditioners for
hypersingular integral equations~\cite{contopanagos-2002} and integral
equations for high-frequency acoustic scattering
phenomena~\cite{boubendir-2014}. We will next use these relationships to
construct integral equations for solving the Laplace-Beltrami equation.

\subsection{Laplace-Beltrami integral equations}

Using the Calder\'on identities of the previous section, we can now
derive integral equations of the second-kind which can be used to
solve the Laplace-Betrami problem. For example, letting $\psi = \cS
\sigma$ in equation~\eqref{eq_lapbel} and preconditioning 
on the left yields:
\begin{equation}
\cS \surflap \cS \sigma = \cS f,
\end{equation}
which turns out to be a second-kind integral equation for the
function~$\sigma$. Once $\sigma$
has been found, the original 
function $\psi$ can be computed easily.
To this end, we begin with the following lemma.

\begin{lemma}\label{lem_slaps}
As a map from $\Gamma \to \Gamma$, 
\begin{equation}\label{eq_laps1}
\cS \surflap \cS = -\frac{\cI}{4} +  \cD^2  - 
\cS(\cS'' + \cD') - 2\cS H \cS'  ,
\end{equation}
where $H$ is the mean curvature of $\Gamma$ and the operator
$\cS'':\Gamma \to \Gamma$ is given by
\begin{equation}
  \cS''\sigma(\bx) = \int_\Gamma \left( \frac{\partial^2 }
  {\partial n^2 } G(\bx-\bx') \right)  \sigma(\bx') \, da(\bx').
\end{equation}
\end{lemma}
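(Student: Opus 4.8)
The plan is to apply the corollary to Lemma~\ref{lem_lap} to the harmonic function $\psi = \cS\sigma$, compose the resulting identity with $\cS$ on the left, and then close the computation with the first of the Calder\'on identities in~\eqref{eq_cal}. The single-layer representation is chosen precisely because $\cS\sigma$ is harmonic off $\Gamma$, so that the corollary is available.

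First I would collect the standard facts about $\psi = \cS\sigma$: it is harmonic in $\bbR^3\setminus\Gamma$; it is continuous across $\Gamma$, so its trace $\psi|_\Gamma$ is unambiguous; and its one-sided normal derivatives obey the jump relation $\lim_{h\to 0^\pm}\tfrac{\partial}{\partial n}\psi(\bx+h\bn) = \mp\tfrac12\sigma(\bx) + \cS'\sigma(\bx)$. Since $\psi$ is not harmonic in a full two-sided neighborhood of $\Gamma$, I would apply the corollary separately on the exterior and interior collars, where $\psi$ is smooth and harmonic, and then pass to the limit on $\Gamma$; for $\sigma$ and $\Gamma$ sufficiently smooth the intrinsic Laplacian of the trace of $\psi$ on the nearby level surfaces converges to $\surflap(\psi|_\Gamma)$, so that from each side
\begin{equation*}
\surflap(\psi|_\Gamma) = -2H\,\frac{\partial\psi}{\partial n}\Big|_{\pm} - \frac{\partial^2\psi}{\partial n^2}\Big|_{\pm}.
\end{equation*}
The left-hand side is a single well-defined function, so I would average the two relations: the average of the one-sided first normal derivatives is $\cS'\sigma$ (the $\mp\tfrac12\sigma$ terms cancel), and the average of the one-sided second normal derivatives is, by definition, the Hadamard finite-part operator $\cS''$. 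This yields the clean intermediate identity
\begin{equation*}
\surflap\,\cS = -\cS'' - 2H\cS' \qquad\text{as maps }\Gamma\to\Gamma.
\end{equation*}

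Composing on the left with $\cS$ gives $\cS\surflap\cS = -\cS\cS'' - 2\cS H\cS'$, and to reach the stated formula it only remains to add the identically-zero operator $\cD^2 - \cS\cD' - \tfrac{\cI}{4}$ — which is exactly the first Calder\'on relation $\cD\cD - \cS\cD' = \cI/4$ from~\eqref{eq_cal} — producing
\begin{equation*}
\cS\surflap\cS = -\frac{\cI}{4} + \cD^2 - \cS(\cS'' + \cD') - 2\cS H\cS',
\end{equation*}
i.e. \eqref{eq_laps1}, now displaying the desired second-kind structure.

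The one genuinely delicate ingredient is the limiting behaviour of the second normal derivative of the single-layer potential. I expect the main obstacle to be justifying that the one-sided limits $\partial^2\psi/\partial n^2|_{\pm}$ exist, that their average is the finite-part integral defining $\cS''$, and — implicitly in the averaging step — that the jump $\partial^2\psi/\partial n^2|_{+} - \partial^2\psi/\partial n^2|_{-}$ equals $2H\sigma$, since this is exactly what makes the mean-curvature contributions arising from the first normal derivative cancel rather than pollute the final identity; alternatively, this jump can be checked directly from the explicit Hessian of $G$. Everything past the intermediate identity $\surflap\,\cS = -\cS'' - 2H\cS'$ — the left composition with $\cS$ and the insertion of the Calder\'on relation — is purely algebraic.
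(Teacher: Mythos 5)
Your proposal is correct and follows essentially the same route as the paper: apply Lemma~\ref{lem_lap} (in its harmonic-function form) to $\psi=\cS\sigma$ to obtain $\surflap\cS=-2H\cS'-\cS''$, compose with $\cS$ on the left, and insert the Calder\'on identity $\cS\cD'=-\cI/4+\cD^2$. Your two-sided averaging argument, with the observation that the jump $2H\sigma$ in $\partial^2\psi/\partial n^2$ exactly compensates the jump $-\sigma$ in $\partial\psi/\partial n$, is a more careful justification of the intermediate identity that the paper simply asserts with "$\cS''$ interpreted in the finite-parts sense."
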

\begin{proof}
  For any $\sigma$ defined along $\Gamma$,
  the function $\cS \sigma$ is infinitely differentiable off of
$\Gamma$ and is 
continuous across~$\Gamma$.
Furthermore, it defines a
harmonic function and therefore $\Delta \cS \sigma = 0$ along~$\Gamma$.
Using this fact, along with 
Lemma~\ref{lem_lap}, we have that
\begin{equation}\label{eq_lap_s}
\surflap \cS =  - 2H\cS' -   \cS'' .
\end{equation}
The operator $\cS''$ is interpreted in the finite-parts sense, 
as is $\cD'$.
After applying $\cS$ to both sides of equation~\eqref{eq_lap_s},
and adding/subtracting $\cS\cD'$, we have:
\begin{equation}
  \cS \surflap \cS  = -2\cS H \cS' - \cS \left( \cS''
    + \cD'\right) + \cS\cD'.
\end{equation}
Using the Calder\'on identity $\cS\cD' = -\cI/4 + \cD^2$
from~\eqref{eq_cal}, 
we have that
\begin{equation}\label{eq_finaleq}
  \cS \surflap \cS  = -\frac{\cI}{4} -2\cS H \cS' - \cS \left( \cS''
    + \cD'\right) + \cD^2.
\end{equation}
The terms $\cS H \cS'$ and $\cD^2$ are trivially compact due to their
weakly-singular kernels, and the
\emph{difference operator} $\cS'' + \cD'$ can also be shown to be
compact by expressing its kernel as
\begin{equation}
  K = \frac{\partial}{\partial n} \left( \frac{\partial G}{\partial n}
   + \frac{\partial G}{\partial n'} \right).
\end{equation}
As $\bx \to \bx'$, the terms $\partial G/\partial n$ and $\partial
G/\partial n'$ cancel to leading order, and what remains after
applying $\partial/\partial n$ is weakly
singular. See~\cite{rokhlin_1983,kress_1978}
for early uses of this fact.
Therefore,~\eqref{eq_finaleq} gives a second-kind integral operator
formulation of the operator $\cS \surflap \cS$.
\end{proof}

The above identity is straightforward to verify on the unit sphere by
using the diagonal forms of the layer potential operators when applied
to spherical harmonics~\cite{vico_2014}.
Alternatively, a different second-kind formulation exists which
corresponds to a fully right-preconditioned system.
We provide this formulation in the following lemma.
\begin{lemma}
  As a map from $\Gamma\to \Gamma$, 
\begin{equation}\label{eq_laps2}
  \surflap \cS^2 = - \frac{\cI}{4} +\cS'^2 - \left( \cS'' + \cD'
  \right)
  \cS
  - 2H\cS' \cS
\end{equation}
\end{lemma}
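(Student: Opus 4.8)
The plan is to follow the same route as the proof of Lemma~\ref{lem_slaps}, but in a ``right-preconditioned'' form: rather than sandwiching $\surflap$ between two copies of $\cS$, I would apply $\surflap$ to the iterated single-layer potential $\cS^2 = \cS\cS$ directly, and then clean up the result with the \emph{companion} Calder\'on identity from~\eqref{eq_cal}, namely $\cS'\cS' - \cD'\cS = \cI/4$, in place of the identity $\cS\cD' = -\cI/4 + \cD^2$ used before.

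The key observation is that nothing about equation~\eqref{eq_lap_s} is special to the density $\sigma$: for \emph{any} admissible density $\mu$ (continuous, in $L^2(\Gamma)$), the function $\cS\mu$ is harmonic off $\Gamma$, continuous across it, and $\Delta \cS\mu = 0$ on $\Gamma$, so Lemma~\ref{lem_lap} gives $\surflap\cS\mu = -2H\cS'\mu - \cS''\mu$ on $\Gamma$, with $\cS''$ (and, below, $\cD'$) interpreted in the Hadamard finite-part sense. Applying this with $\mu = (\cS\sigma)|_\Gamma$ --- which is again continuous since $\sigma$ is, so that $\cS^2\sigma = \cS\mu$ is a bona fide single-layer potential --- I get, as maps from $\Gamma$ to $\Gamma$,
\[
  \surflap \cS^2 = -2H\cS'\cS - \cS''\cS .
\]

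Then I would add and subtract $\cD'\cS$, writing $\cS''\cS = (\cS'' + \cD')\cS - \cD'\cS$, to obtain
\[
  \surflap \cS^2 = -2H\cS'\cS - (\cS'' + \cD')\cS + \cD'\cS ,
\]
and finally substitute $\cD'\cS = \cS'\cS' - \cI/4$, which is just a rearrangement of the Calder\'on identity $\cS'\cS' - \cD'\cS = \cI/4$ from~\eqref{eq_cal}. Collecting terms yields
\[
  \surflap \cS^2 = -\frac{\cI}{4} + \cS'\cS' - (\cS'' + \cD')\cS - 2H\cS'\cS ,
\]
which is the asserted identity. To see that this is genuinely second-kind, I would argue exactly as in Lemma~\ref{lem_slaps}: $\cS'\cS'$ and $H\cS'\cS$ are compact because of their weakly singular kernels, and $(\cS'' + \cD')\cS$ is compact because the difference kernel $\partial_n(\partial_n G + \partial_{n'} G)$ is only weakly singular (the leading singularities of $\partial G/\partial n$ and $\partial G/\partial n'$ cancel as $\bx \to \bx'$), so composing with the smoothing operator $\cS$ preserves compactness.

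I do not expect a substantial obstacle here --- the argument is a near-verbatim parallel of Lemma~\ref{lem_slaps}, with the only delicate point being the bookkeeping of the finite-part interpretations when~\eqref{eq_lap_s} is reused with the density $\cS\sigma$, together with the routine check that the one-sided limits defining $\surflap\cS^2$ on $\Gamma$ agree; they do, since $\surflap$ involves only tangential derivatives, which are continuous across $\Gamma$ for a single-layer potential. As an independent sanity check, the identity can be verified on the unit sphere using the diagonal action of $\cS$, $\cS'$, $\cS''$, $\cD'$ on spherical harmonics, just as remarked after Lemma~\ref{lem_slaps}.
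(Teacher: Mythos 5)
Your proof is correct and is precisely the argument the paper has in mind: the paper omits this proof as ``similar to that of the previous lemma,'' and your adaptation --- applying $\surflap \cS = -2H\cS' - \cS''$ to the density $\cS\sigma$, adding and subtracting $\cD'\cS$, and invoking the companion Calder\'on identity $\cS'\cS' - \cD'\cS = \cI/4$ --- is exactly that parallel route. No gaps; your extra care about the continuity of $\cS\sigma|_\Gamma$ and the tangential nature of $\surflap$ across $\Gamma$ is a welcome bonus.
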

\begin{proof}
The proof is similar to that of the previous lemma, and omitted here.
\end{proof}

The operator $\cS^2$ behaves similarly to convolution with
a logarithmic parametrix along $\Gamma$, and for this reason it is not
surprising that the operator in equation~\eqref{eq_laps2} is Fredholm
of the second-kind.
In many respects, integral equation~\eqref{eq_laps1}
and~\eqref{eq_laps2}
behave identically. However, in practice we prefer
equation~\eqref{eq_laps1} due to the fact that the right-hand
side $f$ is smoothed by the application of $\cS$, and therefore fewer
discretization nodes are needed in order to resolve $\cS f$ than are
needed to resolve $f$.

Lastly, we discuss the invertibility of the integral
operators~\eqref{eq_laps1} and~\eqref{eq_laps2}.
As shown in~\cite{imbertgerard_2017, sifuentes_2015},
since the nullspace of the surface Laplacian is exactly
one-dimensional (constant functions) and $\surflap$ is self-adjoint,
letting the operator $\cW$ be defined as
\begin{equation}
\cW \psi = \int_\Gamma \psi \, da
\end{equation}
we have that the integro-differentiable operator $\surflap + \cW$
is of full rank, and uniquely invertible on the space of mean-zero
functions. Therefore, the operators
\begin{equation}
  \cS \left( \surflap + \cW \right) \cS, \qquad\text{and}
  \qquad \left( \surflap + \cW \right) \cS^2
\end{equation}
are Fredholm operators of the second-kind, and uniquely invertible.
The integral expressions for these operators can be obtained from
the expressions in~\eqref{eq_laps1} and~\eqref{eq_laps2} with the
addition of the terms $\cS \cW \cS$ and $\cW \cS^2$, respectively.

\section{Discretization of surfaces and layer potentials}
\label{sec_surfaces}

Often the hardest part of any boundary integral equation method is not
the fast algorithm used to invert the resulting linear system, but
rather obtaining high-order descriptions of the geometry and building
accurate methods with which to compute weakly-singular integrals.  In
this section we give a brief overview of the surface and density
discretizations we
use for subsequent numerical examples for solving the Laplace-Beltrami
problem. Our solver relies on decoupling the surface description from
the discretization of functions along the surface, and it is based on
the one contained in~\cite{bremer_2012c, bremer_2013}.
This allows for high-order discretizations of functions along
low-order surfaces, and vice versa.
While the overall order of the scheme is often limited by the lowest
order of discretization (i.e. the geometry vs. the unknowns), it is
useful for numerical experiments to have control over both orders.

\subsection{Maps of standard triangles}
\label{sec_standard}

We assume that our surface $\Gamma$ is given by a sequence of curvilinear
triangles, each described as a 
smooth map of the standard simplex triangle, denoted by $T_0$, with vertices
$\{(0,0), (1,0), (0,1) \}$.
That is to say, the
$j$th triangle along~$\Gamma$ is parameterized by the map:
\begin{equation}\label{eq_xji}
\bx^j(u,v) = x^j_1(u,v) \, \exx + x^j_2(u,v) \, \eyy + x^j_3(u,v) \, \ezz,
\end{equation}
with $\exx$, $\eyy$, $\ezz$ the standard basis for $\mathbb R^3$.  In
the case of analytically parameterized surfaces, the functions~$x^j_i$
are
known globally, and in other (lower-order)
cases, the surface maps may be provided triangle-by-triangle
as piecewise polynomials:
\begin{equation}\label{eq_tripols}
x_i^j(u,v) = \sum_{m+n \leq p} c^{ji}_{mn} \, u^m \, v^n,
\end{equation}
with~$p$ denoting the \emph{order of the discretization of} $\Gamma$.
In each case, local derivatives of the surface with respect to $u$ and
$v$ can be
computed analytically.
Surface area
elements can be computed using standard differential geometry
formulas, as discussed in Section~\ref{sec_intro}.
The mean curvature at any point $\bx(u,v)$ can be computed
as
\begin{equation}
H = -\frac{1}{2} \Tr\left({II} \cdot {I}^{-1} \right),
\end{equation}
with $I = g$ (the metric tensor)
and $II$ the first and second fundamental forms,
respectively~\cite{docarmo_1976}.
The second fundamental form is defined as
\begin{equation}
  II =
  \begin{pmatrix}
    \bx_{uu} \cdot \bn &     \bx_{uv} \cdot \bn \\
    \bx_{vu} \cdot \bn &      \bx_{vv} \cdot \bn
  \end{pmatrix},
\end{equation}
with
\begin{equation}
\bx_{uu} = \frac{\partial^2 \bx}{\partial u^2}, \qquad
\bx_{uv} = \frac{\partial^2 \bx}{\partial u \, \partial v}, \qquad
\bx_{vv} = \frac{\partial^2 \bx}{\partial v^2},
\end{equation}
and, as usual, $\bn$ is the outward unit normal to the surface.
The second fundamental form contains 
curvature information about the surface.
These partial derivatives can be computed directly from the expression
for each component $x^j_i$ of the map.

Often, surface geometries of engineering interest are generated by
computer aided drafting or engineering software (CAD or CAE), and
described merely by a sequence of image points for each component of
the map; other points on the surface must be obtained by interpolation.
That is to say, given the image point of each node, the coefficients
for the map in~\eqref{eq_tripols} can be determined.
Using standard nodal locations~\cite{gmsh} shown in
Figure~\ref{fig_gmshnodes} it is possible to describe first through
fourth order curvilinear triangles by providing the image of each
node~\cite{gmsh}. Higher order triangles, of course, require more node
locations.
While these nodes may not be optimal in terms of the conditioning of
the resulting interpolation formula, they are suitably stable for
solving for the coefficients in a Koornwinder polynomial basis using
least squares. Namely, the component maps can equally
be expressed as
\begin{equation}
x_i^j(u,v) = \sum_{m+n \leq p} c^{ji}_{mn} \, K_{mn}(u,v),
\end{equation}
where the $K_{mn}$'s are the affine-translated Koornwinder polynomials on
$T_0$~\cite{bremer_2012c, koornwinder_1975} (these polynomials are
two-variable extensions of classical one-variable
orthogonal polynomials).

\begin{figure}[!t]
  \begin{subfigure}[b]{.22\linewidth}
    \centering
    \includegraphics[width=.9\linewidth]{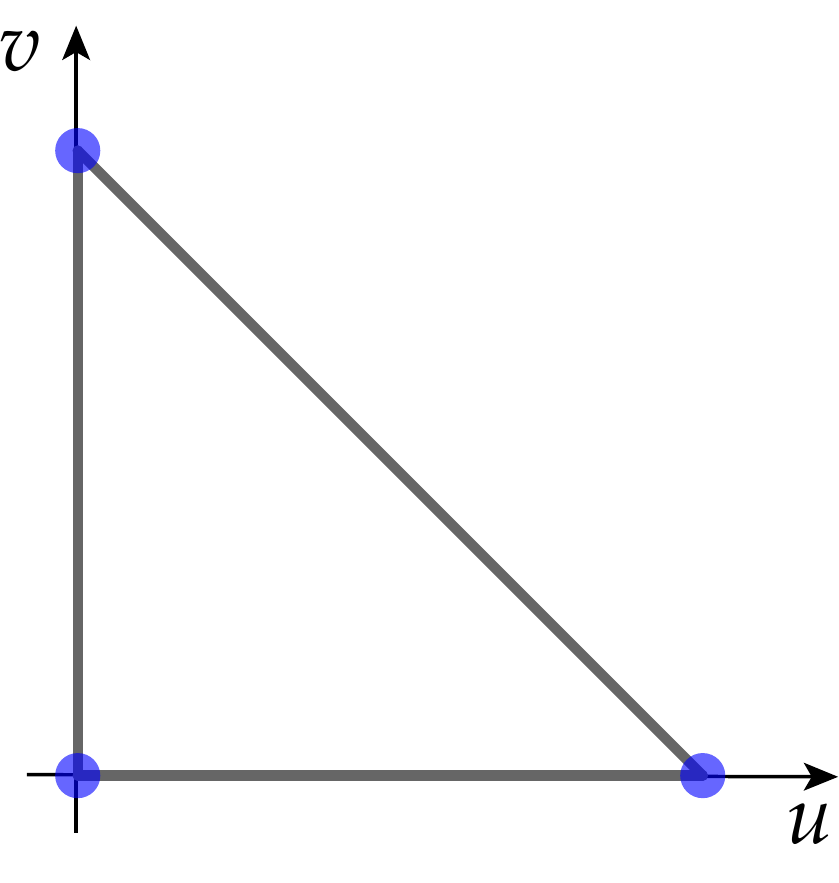}
    \caption{First-order nodes.}
    \label{fig_tri1}
  \end{subfigure}
  \begin{subfigure}[b]{.22\linewidth}
    \centering
    \includegraphics[width=.9\linewidth]{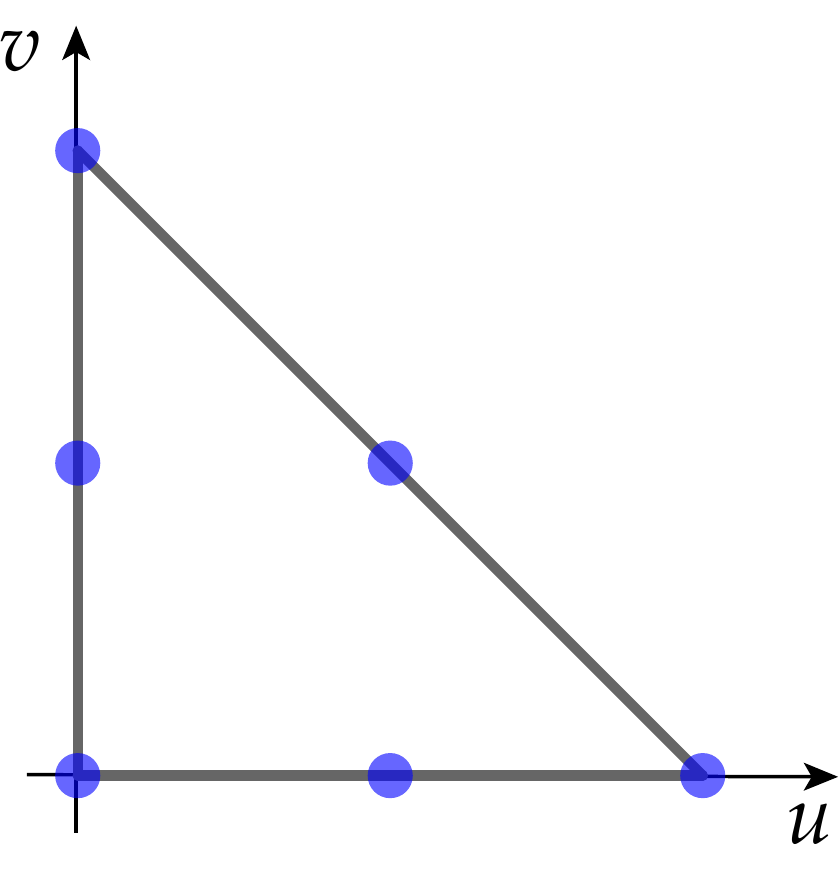}
    \caption{Second-order nodes.}
    \label{fig_tri2}
  \end{subfigure}
  \begin{subfigure}[b]{.22\linewidth}
    \centering
    \includegraphics[width=.9\linewidth]{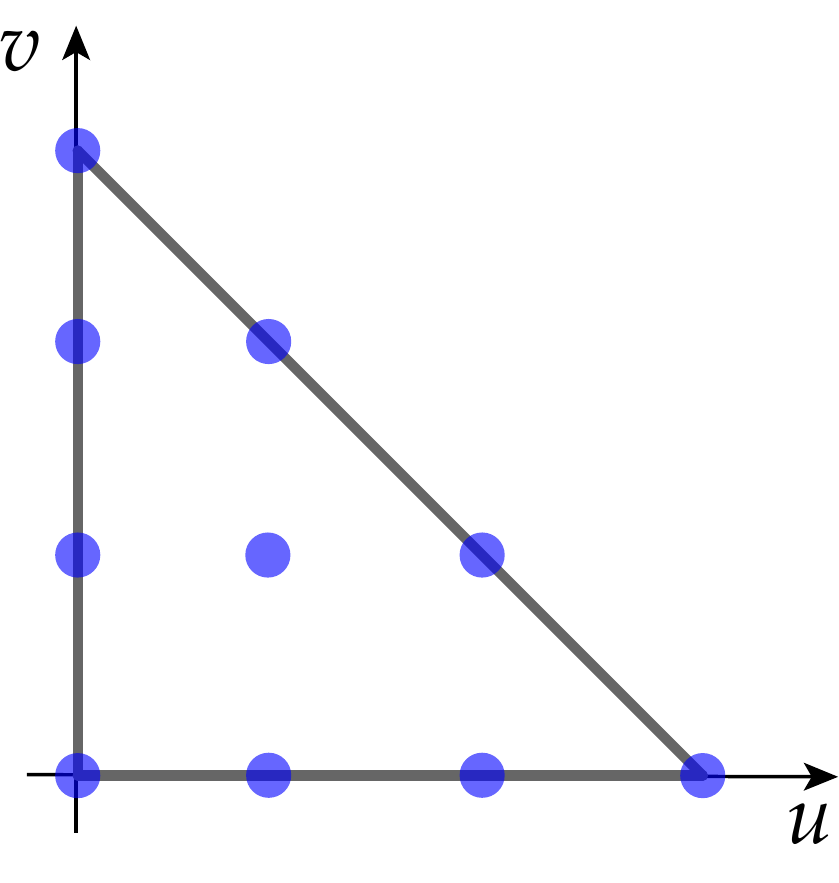}
    \caption{Third-order nodes.}
    \label{fig_tri3}
  \end{subfigure}
  \begin{subfigure}[b]{.22\linewidth}
    \centering
    \includegraphics[width=.9\linewidth]{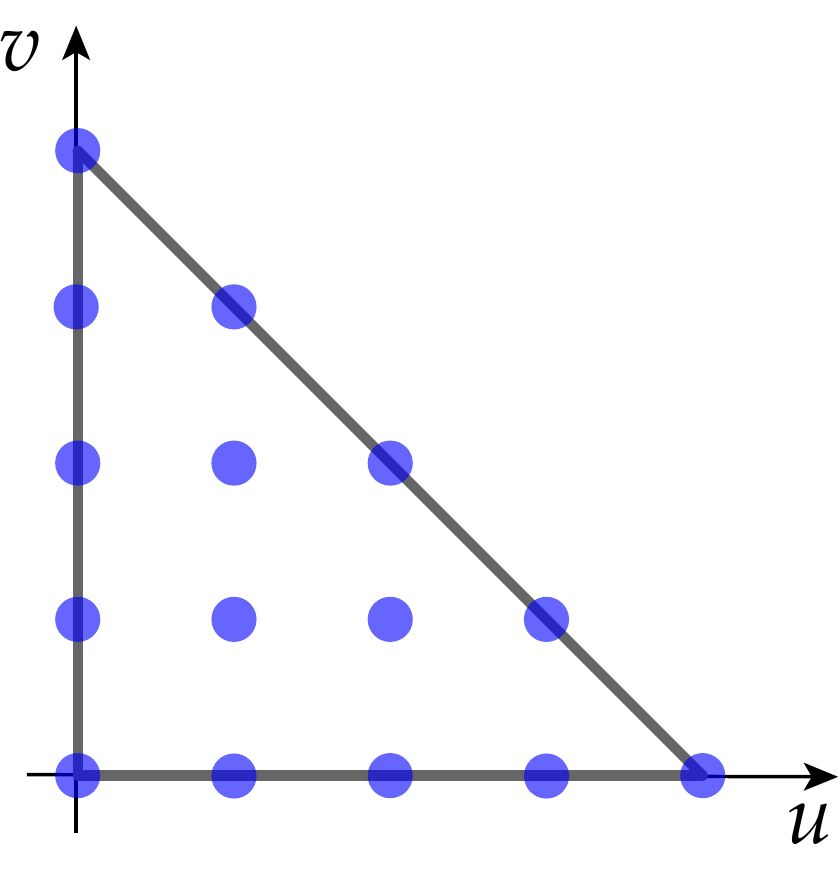}
    \caption{Fourth-order nodes.}
    \label{fig_tri4}
  \end{subfigure}
  \caption{Various discretization nodes in the $uv$-plane describing
    curvilinear triangles}
  \label{fig_gmshnodes}
\end{figure}

\subsection{Function discretization on triangles}

Not only does the surface $\Gamma$ need to be discretized, but so do
any layer potential densities defined along it. Assuming each surface
triangle is given as a map of the standard triangle $T_0$, we
discretize functions along $\Gamma$ by sampling values in the $uv$-plane
of each triangle $T_j$, i.e. $f_j = f_j(u,v)$. Sampling functions on
$T_0$ via points in the $uv$-plane
known as Vioreanu-Rokhlin nodes yields very stable
interpolation formulae, as well as nearly-Gaussian quadrature schemes
for bivariate polynomials~\cite{vioreanu_2014}. 
Subsequent evaluation of functions at points other than the
Vioreanu-Rokhlin nodes can be performed by first computing the
coefficients of the corresponding Koornwinder expansion, and then
evaluating this expansion at an arbitrary point $u,v$. Koornwinder
polynomials can be computed using standard recurrence relations for
Jacobi and Legendre polynomials~\cite{nist}.

\subsection{Computation of layer potentials}
\label{sec_laypot}

The integral equation
solvers used in the subsequent numerical examples section
directly construct the matrices which apply layer potentials
one at a time using either generalized Gaussian
quadrature or adaptive integration, depending on whether or not the
interaction is singular or not.
The individual discretized layer potentials are combined to form the
overall system matrix~$\mtx{A}$, which (unless otherwise noted)
is a discretization of the integral
operator
\begin{equation}
  \cA = -\frac{I}{4} - 2\cS H \cS' - S\left( \cS'' + \cD' \right)
  + \cD^2 + \cS \cW \cS.
\end{equation}
Our solver is based on the one
described in~\cite{bremer_2012c}.
We discretize $\cS$, $\cD$, $\cS'$, and $\cS'' + \cD'$ in this manner.
While some numerical loss of precision in evaluating the kernel of
$\cS'' + \cD'$ occurs, it does not dominate the accuracy of
the overall calculation.
This procedure scales as $\mathcal O(N^2)$, where $N$ denotes the number of
discretization nodes of the unknown along $\Gamma$. Depending on $N$,
the resulting linear system is solved via LAPACK's $\mtx{LU}$
factorization~\cite{lapack} at a cost of $\mathcal O(N^3)$, or via
GMRES at a cost of $\mathcal O(N^2)$.

To this end, we describe the discretization procedure in the case of
the single-layer potential due to a density $\sigma$ located along the
surface $\Gamma$. The discretization procedure for other layer
potentials is nearly identical, with only the kernel changing.
Since $\Gamma$ is described by a sequence of $\ntri$
curvilinear triangles $T_j$, we can rewrite this integral as:
\begin{equation}\label{eq_ssigma}
  \begin{aligned}
    \cS\sigma(\bx) &= \int_\Gamma G(\bx-\bx') \, \sigma(\bx') \, da(\bx') \\
    &= \sum_{j = 1}^{\ntri} \int_{T_j} G(\bx-\bx') \, \sigma(\bx') \, da(\bx') \\
    &= \sum_{j = 1}^{\ntri}  \int_0^1 \int_0^{1-v} 
    \frac{\sigma(\bx^j(u,v))}{4\pi |\bx - \bx^j(u,v)|} \, 
    \sqrt{|g(u,v)|} \, du \, dv,
 \end{aligned}
\end{equation}
where, as in~\eqref{eq_xji}, $\bx^j$ is the map from the standard triangle
$T_0$ to $T_j$ along $\Gamma$.

For a particular triangle $T_j$, if $\bx \in T_j$, then 
specialized quadratures for weakly-singular kernels
must be used to evaluate the integral
\begin{equation}
\cS_{T_j}\sigma(\bx) =   \int_{T_j} G(\bx-\bx') \, \sigma(\bx') \, da(\bx').
\end{equation}
For targets $\bx \notin T_j$, the integrand in
the above expression is smooth, and can be evaluated using adaptive
quadrature in parameter space, $T_0$.

In practice, if two surface triangles are far enough apart, the smooth
Vioreanu-Rokhlin quadrature can be used instead of adaptive
integration. This discretization scheme is more amenable to
acceleration via fast multipole methods, but we do not discuss the
procedure here as our solver implements the above scheme.

\subsubsection{Singular interactions}

To compute the weakly-singular integrals present in the evaluation of
layer potentials, for example, computing $\cS_T \sigma(\bx_i)$ when
$\bx_i$ is a discretization node located on triangle $T$, we use
generalized Gaussian quadratures designed by Bremer and
Gimbutas~\cite{bremer_2012c}. In short, after a precomputation of
quadrature nodes and weights $u_{ij},v_{ij},w_{ij}$ on the standard
triangle~$T_0$ (which depend on the particular target location
$u_i, v_i$), the self-interaction integrals are computed as:
\begin{equation}
  \begin{aligned}
  \cS_T \sigma(\bx_i) &= \iint_T G(\bx_i,\bx') \, \sigma(\bx') \,
  da(\bx') \\
  &= \iint_{T_0} G(\bx_i, \bx'(u,v)) \, \sigma(u,v) \,
  \sqrt{|g(u,v)|} \, du \, dv \\
  &\approx \sum_{j = 1}^{\nquad} w_{ij} \, G(\bx_i,\bx'(u_j, v_j)) \,
  \sigma(u_j, v_j) \,   \sqrt{|g(u_j,v_j)|}.
  \end{aligned}
\end{equation}
In general, the quadrature nodes do \emph{not} coincide with the nodes
at which $\sigma$ has been sampled. Therefore, $\sigma$ must be
interpolated to each of the quadrature nodes. We then have that
\begin{equation}
  \cS_T \sigma(\bx_i) = \sum_{j = 1}^{\nquad} w_{ij} \,
  G(\bx_i,\bx'(u_j, v_j)) \,
  \sqrt{|g(u_j,v_j)|} \sum_{k = 1}^{\npol} \entry{V}(j,k) \, \sigma_k,
\end{equation}
where $\mtx{V}$ denotes the matrix interpolating from the
Vioreanu-Rokhlin nodes $u_k,v_k$ to the quadrature nodes $u_{ij},v_{ij}$.
Using the above expression, entries in the system matrix can be
directly computed. See~\cite{bremer_2012c, bremer, bremer_2013} for more
information regarding the construction of similar quadratures.

\subsubsection{Nearly-singular interactions}

For integrals with \emph{nearly-singular} kernels, i.e. those
integrals~\eqref{eq_ssigma}
with the point $\bx$ near to the triangle $T$ (usually located on an
adjacent triangle), adaptive integration is
performed in order to accurately construct the system matrix.
In particular, given the Vioreanu-Rokhlin interpolation nodes 
on triangle $T$ (equivalently in parameter-space, on $T_0$),
the density $\sigma$ can be expressed as:
\begin{equation}\label{eq_kexp}
\sigma(u,v) = \sum_{m+n\leq p} c_{mn} \, K_{mn}(u,v).
\end{equation}
The coefficients $c_{mn}$ can be computed using
the point values of $\sigma$ at the
interpolation nodes $u_j,v_j$ and the values of $K_{mn}$ at these same
nodes. Restricting the domain of integration
to a single curvilinear triangle $T$, inserting this expansion into
integral~\eqref{eq_ssigma}, we have
\begin{equation}\label{eq_adap}
  \begin{aligned}
    \cS_T \sigma(\bx) &= \iint_T G(\bx,\bx') \, 
    \sigma(\bx') \, da(\bx') \\
    &= \iint_{T_0} G(\bx,\bx'(u,v)) \, 
     \sum_{m,n} c_{mn} \, 
    K_{mn}(u,v) \,  
    \sqrt{|g(u,v)|} \, du \, dv \\
    &= \sum_{m,n} c_{mn} \iint_{T_0} 
    G(\bx,\bx'(u,v)) \, K_{mn}(u,v) \, 
    \sqrt{|g(u,v)|} \, du \, dv.
 \end{aligned}
\end{equation}
If the matrix mapping point values of $\sigma$ to coefficients
in~\eqref{eq_kexp} is denoted by $\mtx{U}$, then each $c_\ell$ is
given by
\begin{equation}
c_\ell = \sum_{k=1}^{\npol} \entry{U}(\ell,k) \, \sigma_k,
\end{equation}
where $\npol$ denotes the number of Koornwinder polynomials of
degree~$\leq p$, and the $c_{mn}$'s have been linearly ordered.
In fact, $\npol = (p+1)(p+2)/2$.
Inserting this expression into~\eqref{eq_adap} yields
\begin{equation}
  \begin{aligned}
    \cS_T \sigma(\bx) &= \sum_{\ell=1}^{\npol} 
    \sum_{k=1}^{\npol} \entry{U}(\ell,k) \, \sigma_k \iint_{T_0} 
    G(\bx,\bx'(u,v)) \, K_\ell(u,v)  \, 
    \sqrt{|g(u,v)|} \, du \, dv \\
    &= \sum_{k=1}^{\npol} \sigma_k 
    \sum_{\ell=1}^{\npol} \entry{U}(\ell,k) \, \cS_T K_\ell(\bx).
 \end{aligned}
\end{equation}
For each target $\bx_i$ near to triangle $T$, the values 
$\cS_T K_\ell(\bx_i)$ can be computed via adaptive integration
on $T_0$ and
summed across column elements of $\mtx{U}$ to compute the contribution
of the point value $\sigma_k$ to the evaluation of $\cS_T\sigma
(\bx_i)$.
Entries in the discretized system matrix $\mtx{S}$ of $\cS$ are
then given by
\begin{equation}
\entry{S}(i,j) = \sum_{\ell = 1}^{\npol} \entry{U}(\ell,j) \, \cS_T
K_\ell(\bx_i).
\end{equation}
This procedure is then repeated for all targets not contained on~$T$.

\section{Numerical examples}
\label{sec_numerical}

In this section we provide several numerical experiments demonstrating
the integral equation methods for solving
the Laplace-Beltrami equation. Each of the following numerical
examples was implemented in Fortran 90, compiled with the Intel
Fortran compiler (using the MKL libraries), and executed
on a 60-core machine with 4 Intel Xeon processors
with 1.5Tb of shared RAM. When possible, OpenMP parallelization was
used (dense matrix-vector multiplies, matrix generation, etc.).

\subsection{Convergence on the sphere}
\label{sec_sphere}

In this first numerical experiment,
we solve $\surflap \phi = f$ on the unit sphere.
The numerical  result is compared to the exact calculation when~$f$ is
a spherical harmonic, whose solution is known
analytically~\cite{vico_2014}.

We define the spherical harmonic of degree~$\ell$ and order~$m$ to be
$Y^m_\ell$, $|m| \leq \ell$, given by
\begin{equation}
Y^m_\ell(\theta,\phi) = P^m_\ell(\cos\theta) \, e^{im\phi}.
\end{equation}
We have implicitly normalized the associated Legendre function
$P^m_\ell$ so that
\begin{equation}
\int_0^{2\pi} \int_0^\pi \left| Y^m_\ell(\theta,\phi) \right|^2 \, \sin\theta \,
d\theta \, d\phi = 1.
\end{equation}
The functions $Y^m_\ell$ are orthonormal, and are
the eigenfunctions of the Laplace-Beltrami
operator on the sphere with eigenvalues $\lambda^m_\ell =
-\ell(\ell+1)$.
Using this fact, we can construct an explicit, exact solution to the
Laplace-Beltrami problem
\begin{equation}\label{eq_surfylm}
\surflap \phi = Y^m_\ell.
\end{equation}
The solution $\phi$ is given analytically as
$\phi = -Y^m_\ell/\ell(\ell+1)$.
We verify our numerical solver by reformulating
problem~\eqref{eq_surfylm} in integral form as before:
\begin{equation}
\cS \left( \surflap + \cW \right) \cS \sigma = \cS Y^m_\ell.
\end{equation}
The right-hand side, $\cS Y^m_\ell$ is computed numerically, and the
integral equation for $\sigma$
is discretized and solved using Gaussian elimination.
The solution $\phi$ is then computed numerically as $\phi = \cS
\sigma$ and compared with the exact solution.

\begin{figure}[!t]
  \begin{center}
    \begin{subfigure}[b]{.3\linewidth}
      \centering
      \includegraphics[width=.95\linewidth]{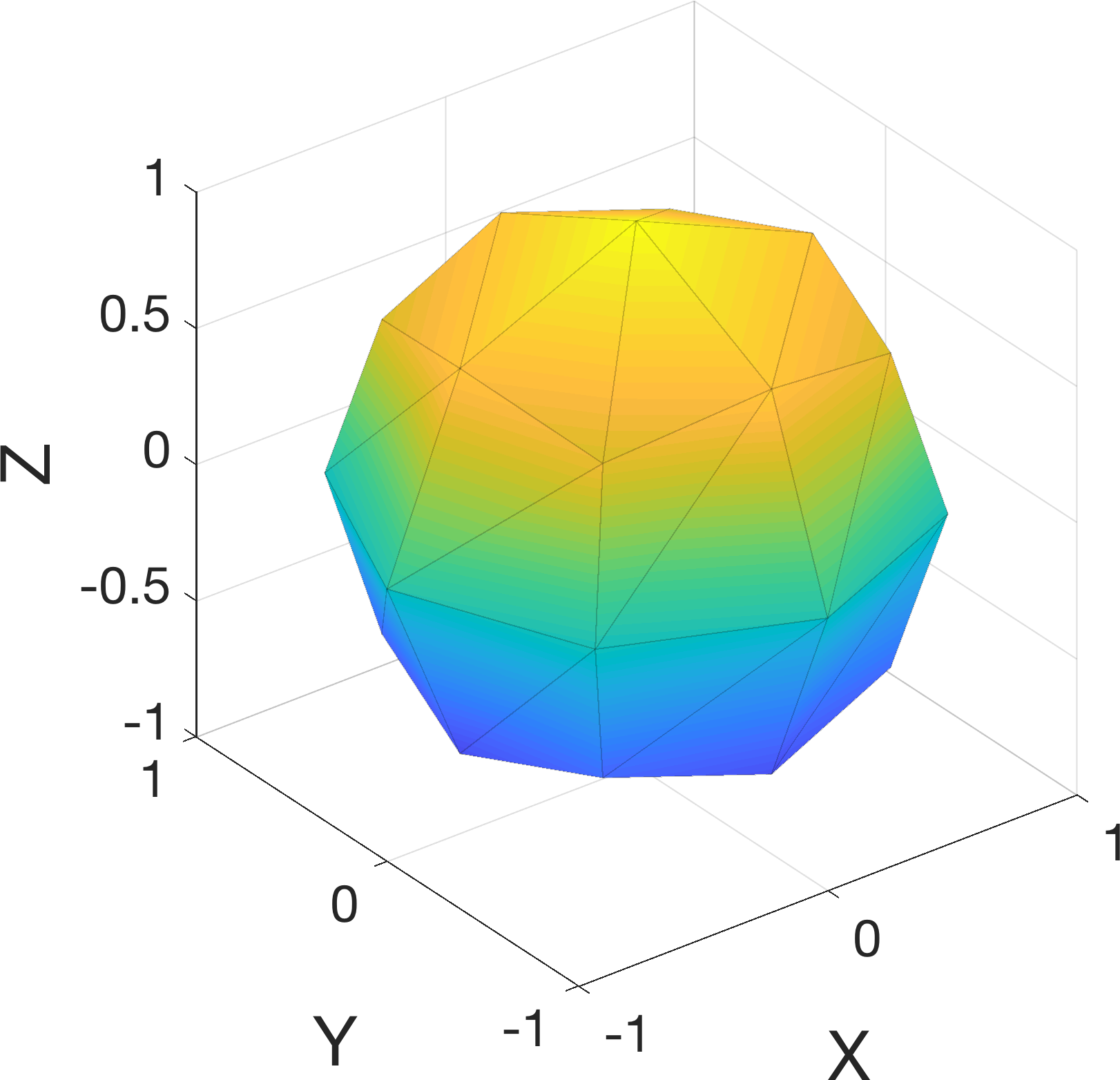}
      \caption{$\ntri = 48$.}
    \end{subfigure}
    \quad
    \begin{subfigure}[b]{.3\linewidth}
      \centering
      \includegraphics[width=.95\linewidth]{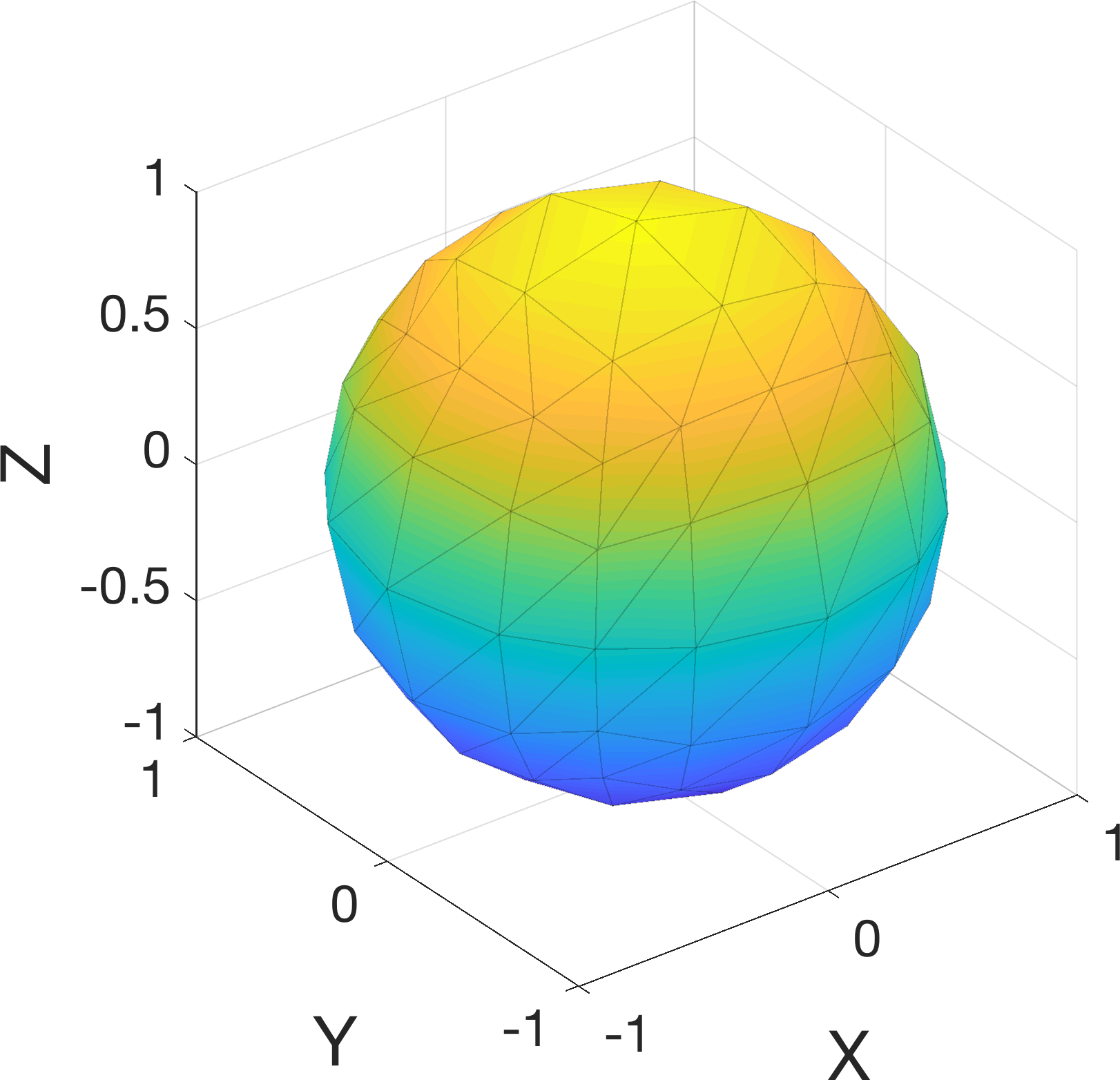}
      \caption{$\ntri = 192$.}
    \end{subfigure}
    \quad
    \begin{subfigure}[b]{.3\linewidth}
      \centering
      \includegraphics[width=.95\linewidth]{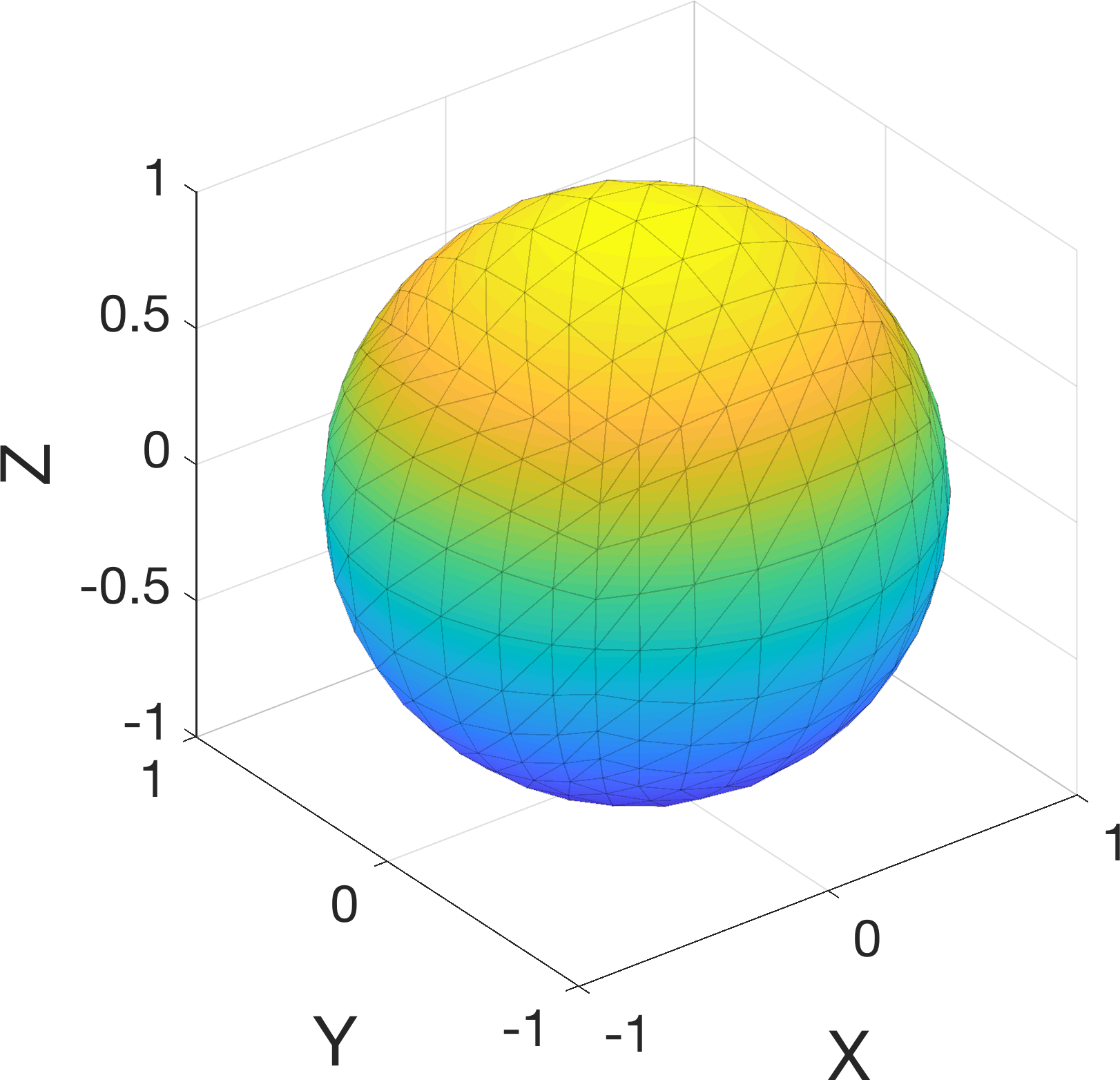}
      \caption{$\ntri = 768$.}
    \end{subfigure}
    \caption{A spherical triangulation computed by projecting an
      isotropic triangulation of the surface of a cube. Note that only
      flat projections of curvilinear triangles are plotted,
      analytic formulae are
    used to evaluate at interior points of each triangle.}
    \label{fig_proj}
  \end{center}
\end{figure}



A triangulation of the sphere was computed by projecting an isotropic
triangulation of the surface of a cube onto the surface of the sphere;
see Figure~\ref{fig_proj} for a depiction of this.
Interpolation along the surface and subsequent derivatives can therefore
be computed exactly in order to study the behavior of the
integral equations, and not be limited by the order of discretization
of the surface.
Non-self interactions were computed using adaptive quadrature
with an absolute tolerance of at least $10^{-10}$.
Convergence results in $L^2$ along the sphere are given in
Table~\ref{tab_ylm}. The column labeled $p$ denotes the order of
discretization of $\sigma$ on each triangle, and $\npts$ denotes the
total number of discretization points: $\npts = \ntri (p+1)(p+2)/2$.
A numerical approximation to
the continuous $L^2$ norm of the error is given by
\begin{equation}
  \begin{aligned}
||\phi - \phiexact||_\Gamma^2 &= \int_\Gamma |\phi - \phiexact|^2 \,
da \\
&\approx \sum_{j=1}^{\npts} w_j \, \left| \phi_j +
  \frac{Y_\ell^m (\bs_j) }{\ell(\ell+1)}  \right|^2,
\end{aligned}
\end{equation}
where~$\bs_j$ denotes the $j$th discretization node on the sphere
and $w_j$ the corresponding $j$th smooth quadrature weight at location
$\bs_j$.
The mean of the solution $\phi$ was calculated similarly as
\begin{equation}
  \begin{aligned}
    \text{mean of } \phi &= \int_\Gamma \phi \, da \\
    &\approx \sum_{j=1}^{\npts} w_j \, \phi_j.
  \end{aligned}
\end{equation}

Examining these results, we see that high-order convergence is
obtained in $\phi$ on the sphere, appropriate with the order of
discretization. Since the geometry information is available exactly,
with $p=4$ the overall convergence order should be 4th order and with
$p=8$ the overall convergence order should be 8th order.
This high-order convergence is only made possible by
the high-digit accuracy of the generalized Gaussian quadrature used
for computing the weakly-singular integrals and the use of an exact,
analytically parameterized boundary. Furthermore, the condition number
of the discretization system in most cases was $\leq 10$, therefore
not many digits were lost to numerical round-off when performing
Gaussian elimination.

\begin{table}[!t]
  \begin{center}
    \caption{Convergence on the sphere of the Laplace-Beltrami
      integral equation.}
    \label{tab_ylm}
    \begin{subtable}[b]{.45\linewidth}
      \begin{center}
      \caption{Convergence for $\surflap \phi = Y^1_1$.}
      \begin{tabular}{|ccc|cc|} \hline
        $p$ & $\ntri$ & $\npts$ & $L^2$ error
              & mean of $\phi$ \\ \hline
        4 & 48 & 720 &  $9.0 \cdot 10^{-6}$ & $3.3 \cdot 10^{-16}$ \\ \hline
        4 & 192 & 2880&  $1.4 \cdot 10^{-7}$ & $5.9 \cdot 10^{-16}$ \\ \hline
        4 & 768 & 11520 & $1.6 \cdot 10^{-9}$ & $1.2 \cdot 16^{-15}$ \\ \hline
      \end{tabular}
      \end{center}
    \end{subtable} \hfill
    \begin{subtable}[b]{.45\linewidth}
      \begin{center}
      \caption{Convergence for $\surflap \phi = Y^1_1$.}
      \begin{tabular}{|ccc|cc|} \hline
        $p$ & $\ntri$ & $\npts$ & $L^2$ error
              & mean of $\phi$ \\ \hline
        8 & 48 & 2160 & $2.4 \cdot 10^{-8}$  & $-6.4 \cdot 10^{-16}$ \\ \hline
        8 & 192 & 8640 & $4.7 \cdot 10^{-11}$ & $-2.3 \cdot 10^{-17}$ \\ \hline
        8 & 768 & 34560 & $5.8 \cdot 10^{-14}$ & $-9.6 \cdot 10^{-16}$ \\ \hline
      \end{tabular}
      \end{center}
    \end{subtable}\\
    \vspace{\baselineskip}
    \begin{subtable}[b]{.45\linewidth}
      \begin{center}
      \caption{Convergence for $\surflap \phi = Y^6_7$.}
      \begin{tabular}{|ccc|cc|} \hline
        $p$ & $\ntri$ & $\npts$ & $L^2$ error
              & mean of $\phi$ \\ \hline
        4 & 48 & 720 &  $3.1 \cdot 10^{-4}$ & $-6.2 \cdot 10^{-8}$ \\ \hline
        4 & 192 & 2880 & $8.4 \cdot 10^{-6}$ & $-1.5 \cdot 10^{-9}$ \\ \hline
        4 & 768 & 11520 &  $1.2 \cdot 10^{-7}$ & $-4.7 \cdot 10^{-12}$ \\ \hline
      \end{tabular}
      \end{center}
    \end{subtable} \hfill
    \begin{subtable}[b]{.45\linewidth}
      \begin{center}
      \caption{Convergence for $\surflap \phi = Y^6_7$.}
      \begin{tabular}{|ccc|cc|} \hline
        $p$ & $\ntri$ & $\npts$ & $L^2$ error
              & mean of $\phi$ \\ \hline
        8 & 48 & 2160 &  $2.3 \cdot 10^{-6}$ & $7.2 \cdot 10^{-10}$ \\ \hline
        8 & 192 & 8640 & $8.1 \cdot 10^{-9}$ & $2.3 \cdot 10^{-14}$ \\ \hline
        8 & 768  & 34560 & $1.0 \cdot 10^{-11}$ & $-2.0 \cdot 10^{-16}$ \\ \hline
      \end{tabular}
      \end{center}
    \end{subtable}
  \end{center}
\end{table}

\subsection{Laplace-Beltrami on general surfaces}

Except along a handful of geometries, exact solutions to the
Laplace-Beltrami equation are not known. Therefore, in order to test
the numerical integral equation solver we have constructed, a
right-hand side must be numerically generated for which we know the
solution \emph{a priori}. This can be accomplished by using the
relationship between the surface Laplacian and the volume Laplacian,
given in Lemma~\ref{lem_lap}.

\begin{figure}[!t]
  \begin{center}
    \begin{subfigure}[b]{.45\linewidth}
      \centering
      \includegraphics[width=.95\linewidth]{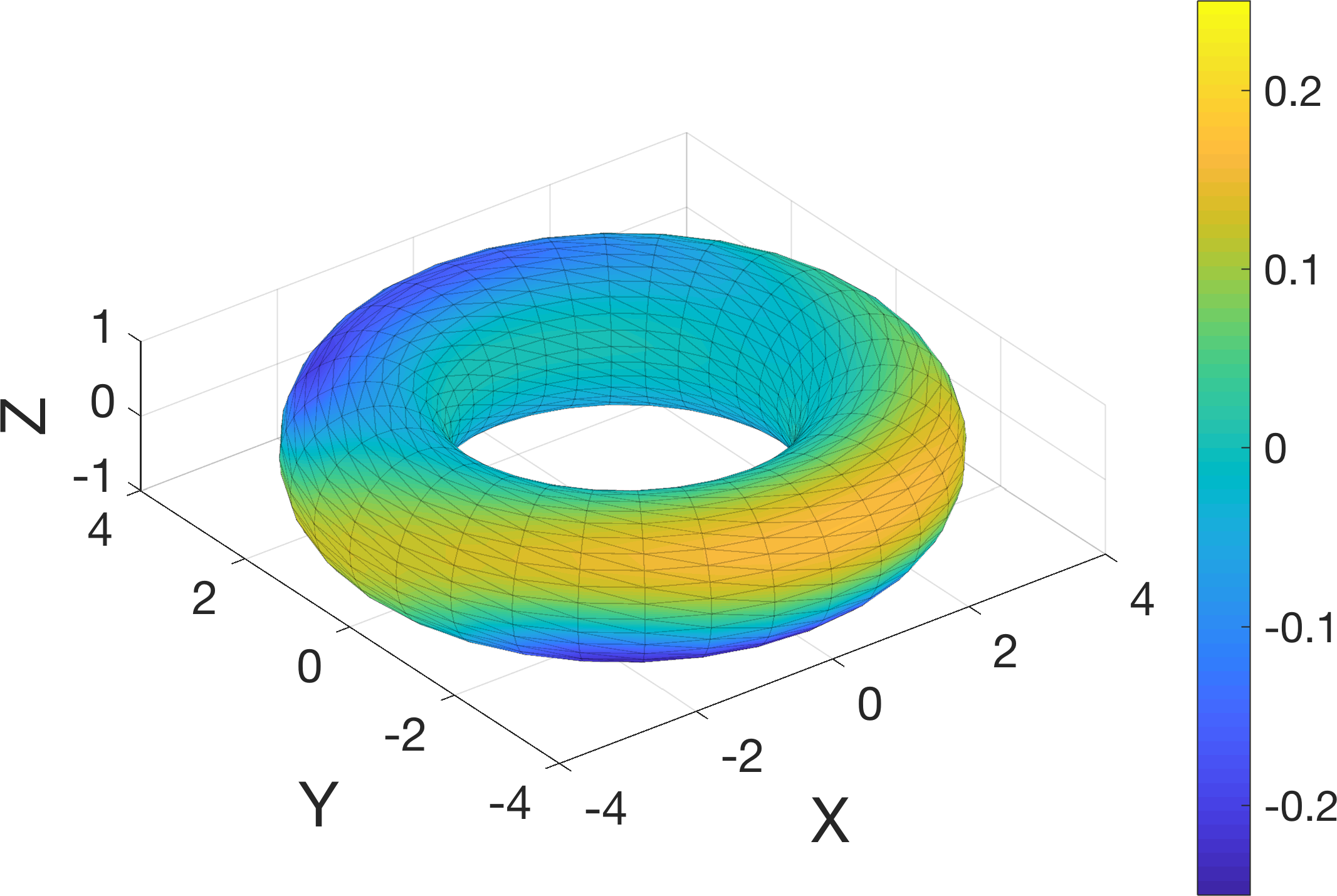}
      \caption{The right-hand side of $\surflap \phi = f$.}
      \label{fig_tor_rhs}
    \end{subfigure}
    \quad
    \begin{subfigure}[b]{.45\linewidth}
      \centering
      \includegraphics[width=.95\linewidth]{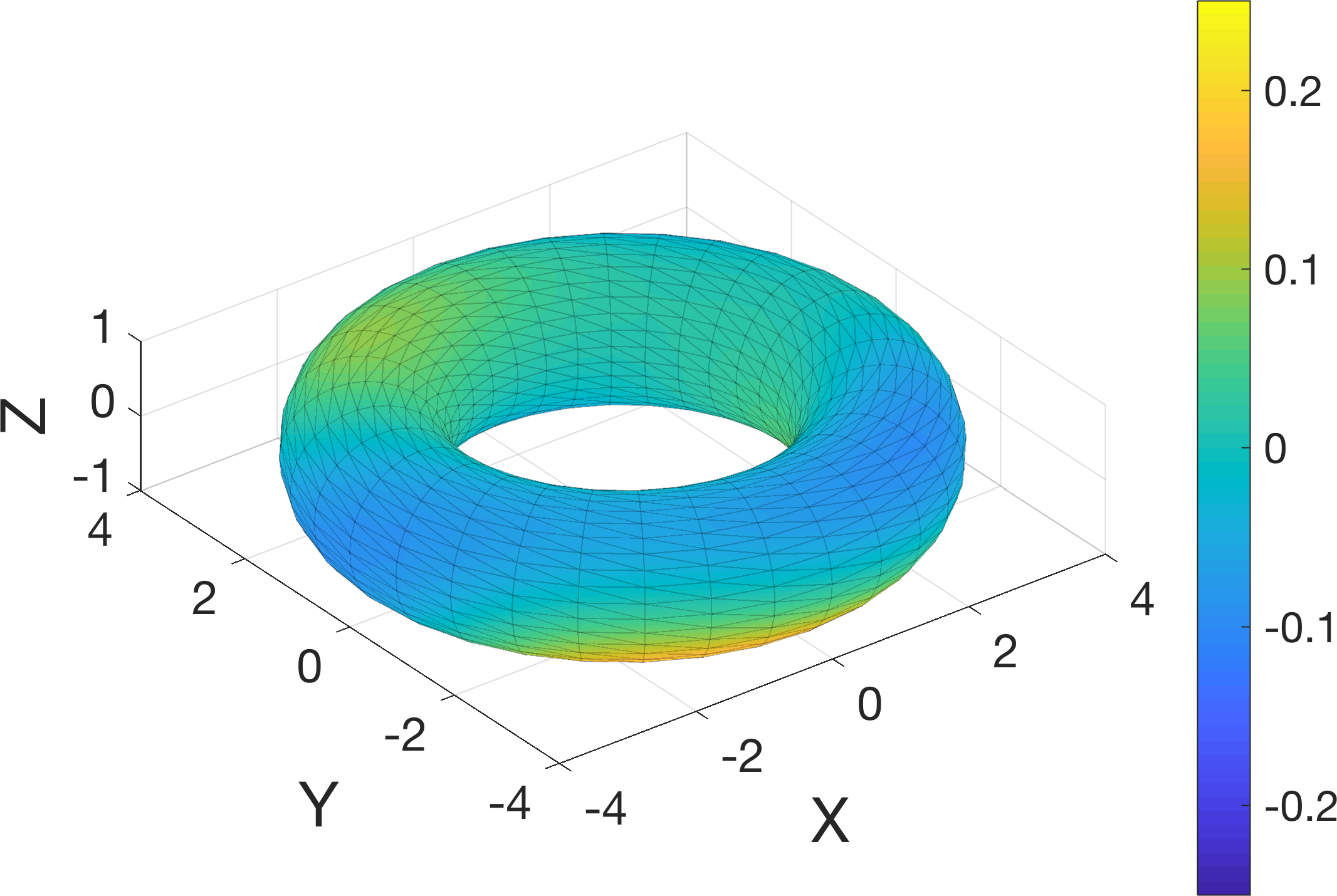}
      \caption{The solution of $\surflap \phi = f$.}
      \label{fig_tor_sol}
    \end{subfigure}
    \caption{The right-hand side and solution to $\surflap \phi = f$
      along the surface of a torus. }
    \label{fig_tor}
  \end{center}
\end{figure}

\begin{table}[b]
  \begin{center}
    \caption{Convergence for $\surflap \phi = f$ along the torus using
      an 8th-order discretization. The condition number of the
      discretized matrices for $\ntri=32$ and $\ntri=128$ was was $1.8
      \cdot 10^{1}$. The cost of computing the SVD (and therefore the
      condition number) was prohibitive for $\ntri=512$, but a similar
      estimate is expected due to the stability of second-kind
      integral equations under
      refinement~\cite{anselone_1971,atkinson_1997}.}
    \label{tab_torconv}
    \begin{tabular}{|cc|cc|} \hline
      $\ntri$ & $\npts$ & $L^2$ error
      & mean of $\phi$ \\ \hline
      32  & 1440 &  $ 1.6 \cdot 10^{-4}$ & $ 8.2 \cdot 10^{-5}$ \\ \hline
      128  & 5760 & $ 4.7 \cdot 10^{-7}$ & $ -3.3\cdot 10^{-8}$ \\ \hline
      512  & 23040 & $ 1.1 \cdot 10^{-9}$ & $ 2.1 \cdot 10^{-11}$ \\ \hline
    \end{tabular}
  \end{center}
\end{table}

In particular, let $g = g(x_1,x_2,x_3)$ be a
smooth function defined in the interior of the region bounded by
$\Gamma$ and in an exterior neighborhood of $\Gamma$. Next, define
the function $f$ along~$\Gamma$ to be
\begin{equation}\label{eq_f_rhs}
  \begin{aligned}
    f &= \surflap \, g\big|_\Gamma \\
    &= \Delta g - 2H\frac{\partial g}{\partial n} -
    \frac{\partial^2 g}{\partial n^2}.
  \end{aligned}
\end{equation}
Since $f = \surfdiv \surfgrad g$,
it is easy to show that $f$ has mean-zero by using Gauss's Theorem
along~$\Gamma$.
Then, the exact solution to the Laplace-Beltrami problem $\surflap
\phi = f$ is given by
\begin{equation}
  \phi = g\big|_\Gamma - \frac{1}{|\Gamma|} \int_\Gamma g \, da,
\end{equation}
which is to say, the projection of $g$ onto mean-zero functions
on~$\Gamma$.

We first use this approach to test the surface Laplacian solver along
a 3-to-1 torus, analytically parameterized as
\begin{equation}\label{eq_torus}
  \begin{aligned}
    x_1(u,v) &= (3 + \cos u) \cos v \\
    x_2(u,v) &= (3 + \cos u) \sin v \\
    x_3(u,v) &= \sin u,
  \end{aligned}
\end{equation}
for $(u,v) \in [0,2\pi] \times [0,2\pi]$. We define the function $g$
to be
\begin{equation}
  g(\bx) = C \sum_{j=1}^{10} \frac{1}{4\pi |\bx-\bx_j|},
\end{equation}
where the $\bx_j$ are placed randomly at a distance of 7 from the
origin and $C$ is numerically calculated so that $\int_\Gamma |f|^2 da
= 1$.
The right-hand side, $f = \surflap g\big|_\Gamma$ is shown in
Figure~\ref{fig_tor_rhs} and the solution is shown in
Figure~\ref{fig_tor_sol}. Table~\ref{tab_torconv} contains convergence
results for an 8th-order discretization of the integral equation.
The linear system was solved using GMRES
with a relative $\ell^2$-residual tolerance of $10^{-14}$.

Finally, we apply our solver to geometries constructed via Computer
Aided Design (CAD) software. The geometries in Figures~\ref{fig_gmsh1}
and~\ref{fig_gmsh2} were designed in Autodesk Fusion
360~\cite{autodesk_fusion}, exported as \texttt{.step} files, and then
meshed using 4th-order curvilinear triangles in Gmsh~\cite{gmsh}
(whose nodal locations are given in Figure~\ref{fig_tri4}). Take note
that the right-hand sides shown in Figures~\ref{fig_gmsh1_rhs}
and~\ref{fig_gmsh2_rhs} are
\emph{not} smooth. This is because of normal and curvature discontinuities
generated in either the modeling or meshing procedure. Sufficient
smoothing and optimization would have to be done in order to avoid
this phenomenon.

\begin{figure}[!t]
  \begin{center}
    \begin{subfigure}[b]{.45\linewidth}
      \centering
      \includegraphics[width=.95\linewidth]{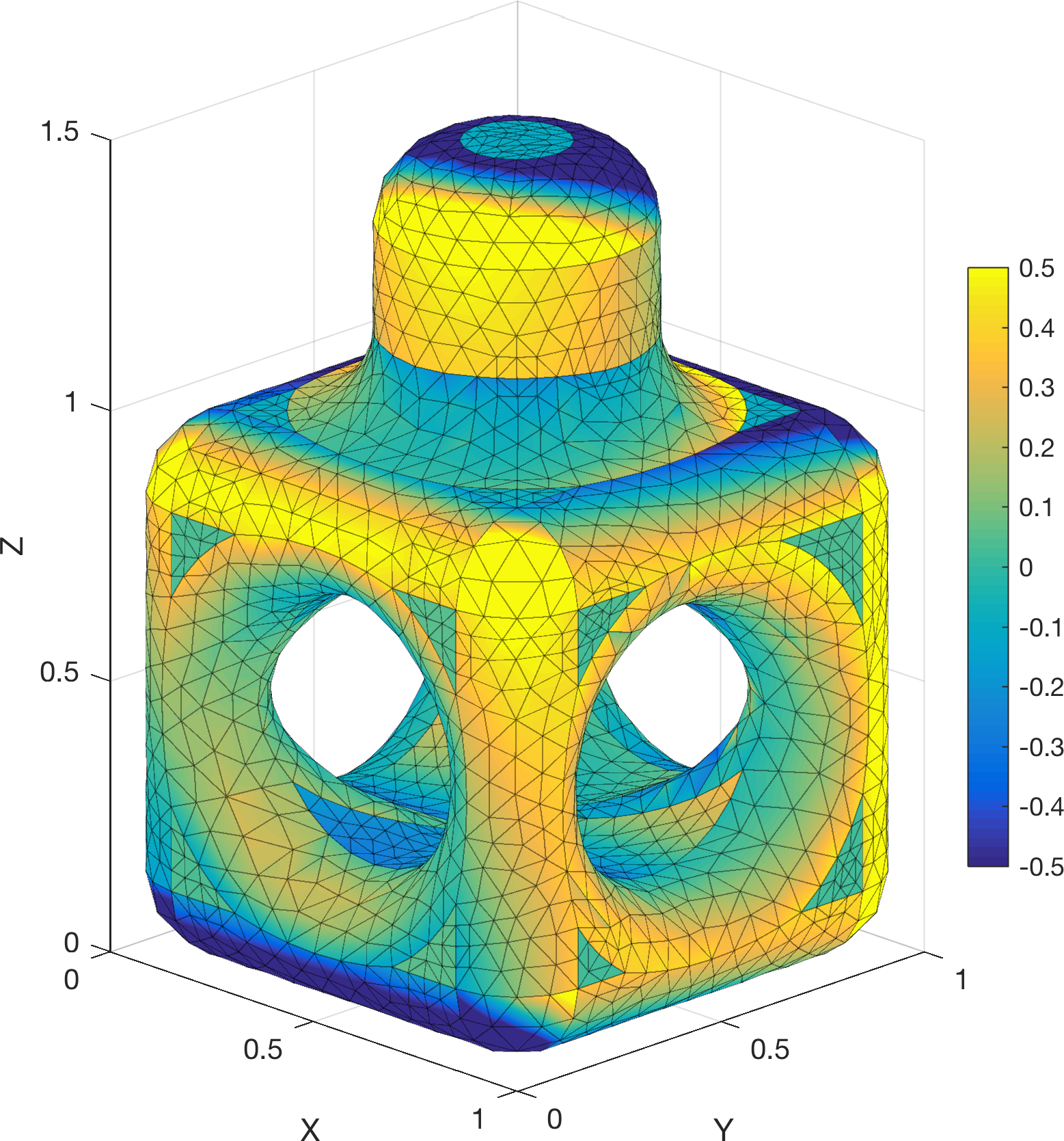}
      \caption{The right-hand side of $\surflap \phi = f$.}
      \label{fig_gmsh1_rhs}
    \end{subfigure}
    \quad
    \begin{subfigure}[b]{.45\linewidth}
      \centering
      \includegraphics[width=.95\linewidth]{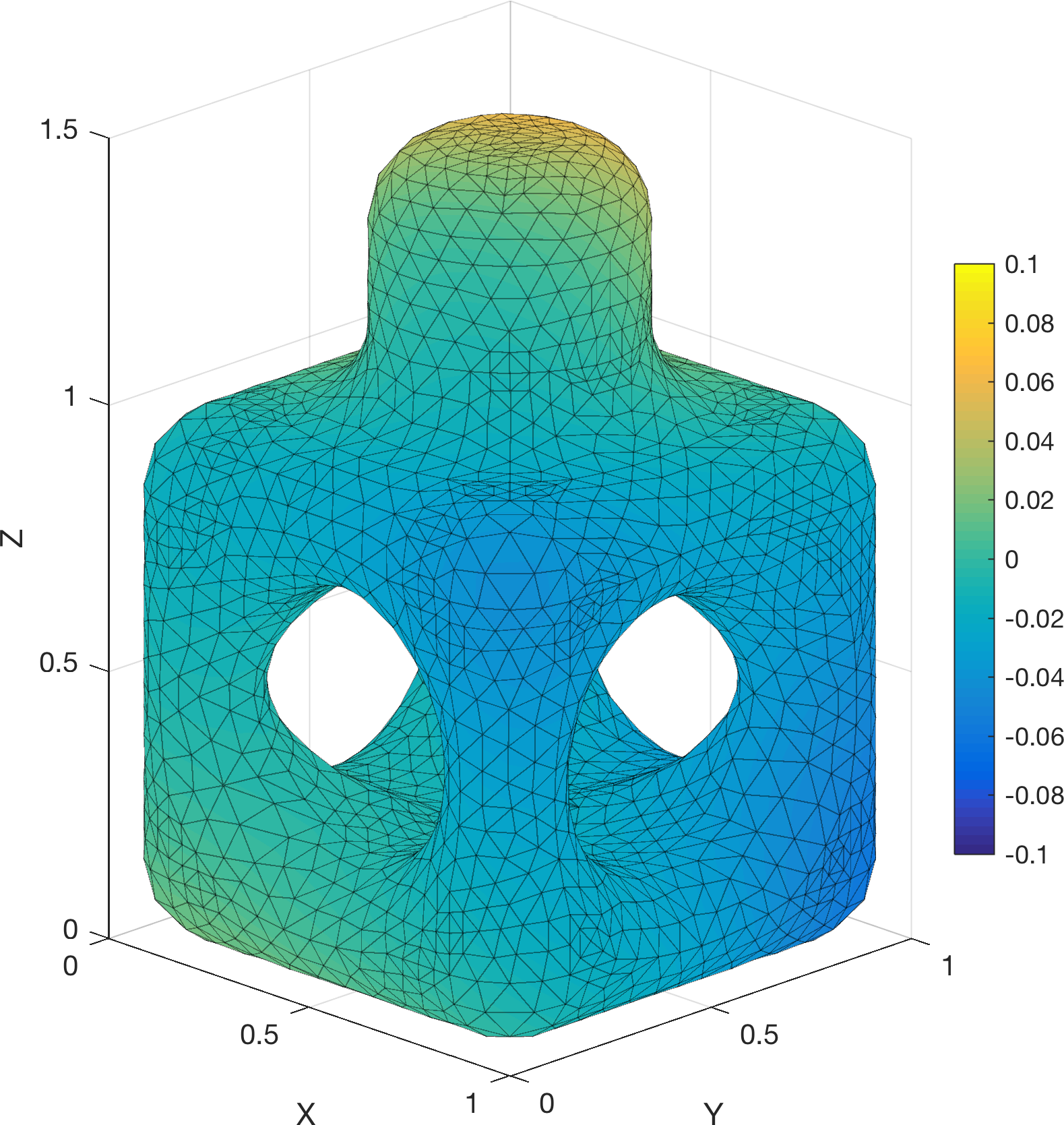}
      \caption{The solution of $\surflap \phi = f$.}
      \label{fig_gmsh1_sol}
    \end{subfigure}
    \caption{The right-hand side and solution to $\surflap \phi = f$
      along a 4th-order triangulated surface. The surface consisted of
      1412 triangles, was discretized using 8th-order interpolation
      points, resulting in 63,640 total discretization nodes. The solve
      resulted in an $L^2$ error of $2.7\cdot 10^{-2}$.
      The mean of the solution
      was $-6.8 \cdot 10^{-2}$.
      The solve required 22 GMRES iterations and achieved a
      relative residual of $8.9 \cdot 10^{-15}$.
      Note the different color scale between the images.}
    \label{fig_gmsh1}
  \end{center}
\end{figure}

\subsection{The Hodge decomposition}

As mentioned in the introduction, any tangential vector field along
smooth closed multiply connected surfaces admits what is known as a Hodge
decomposition:
\begin{equation}
\bF = \surfgrad \alpha  + \bn \times \surfgrad \beta  + \bH.
\end{equation}
Correspondingly, given a smooth vector field $\bF$, we can
decompose it into its Hodge
components by solving the following Laplace-Beltrami equations and
computing $\bH$:
\begin{equation}\label{eq_hodgesystem}
\begin{gathered}
\surflap \alpha = \surfdiv \bF, \qquad \surflap \beta = -\surfdiv (\bn
\times \bF), \\
\bH = \bF - \surfgrad \alpha - \bn \times \surfgrad \beta.
\end{gathered}
\end{equation}

To this end, we first define a smooth, arbitrary vector field $\bV$ 
in $\bbR^3$ and compute $\bF$ as its tangential projection onto 
the surface~$\Gamma$: $\bF = -\bn \times \bn \times \bV$.
The surface divergence of $\bF$ can then be calculated in terms of the
divergence of the volume vector field $\bV$ as~\cite{nedelec}:
\begin{equation}
  \surfdiv \bF = \nabla \cdot \bV - 2H \left(\bn \cdot \bV \right) -
  \frac{\partial}{\partial n } \left(\bn \cdot \bV\right).
\end{equation}
In particular, we compute $\bV = \bB$ using the Biot-Savart
Law~\cite{jackson}
 for a point current $\bL$ located at $\bx'$:
\begin{equation}
    \bB(\bx) = \frac{\bL \times (\bx - \bx')}{|\bx - \bx'|^3}.
\end{equation}
In general, the tangential projection of~$\bB$
of this type will have non-zero projections onto
all three components in the Hodge decomposition.
The right-hand sides of the Laplace-Beltrami problems
in~\eqref{eq_hodgesystem} are automatically mean-zero, as they are
exact differentials.

\begin{figure}[!t]
  \begin{center}
    \begin{subfigure}[b]{.45\linewidth}
      \centering
      \includegraphics[width=.95\linewidth]{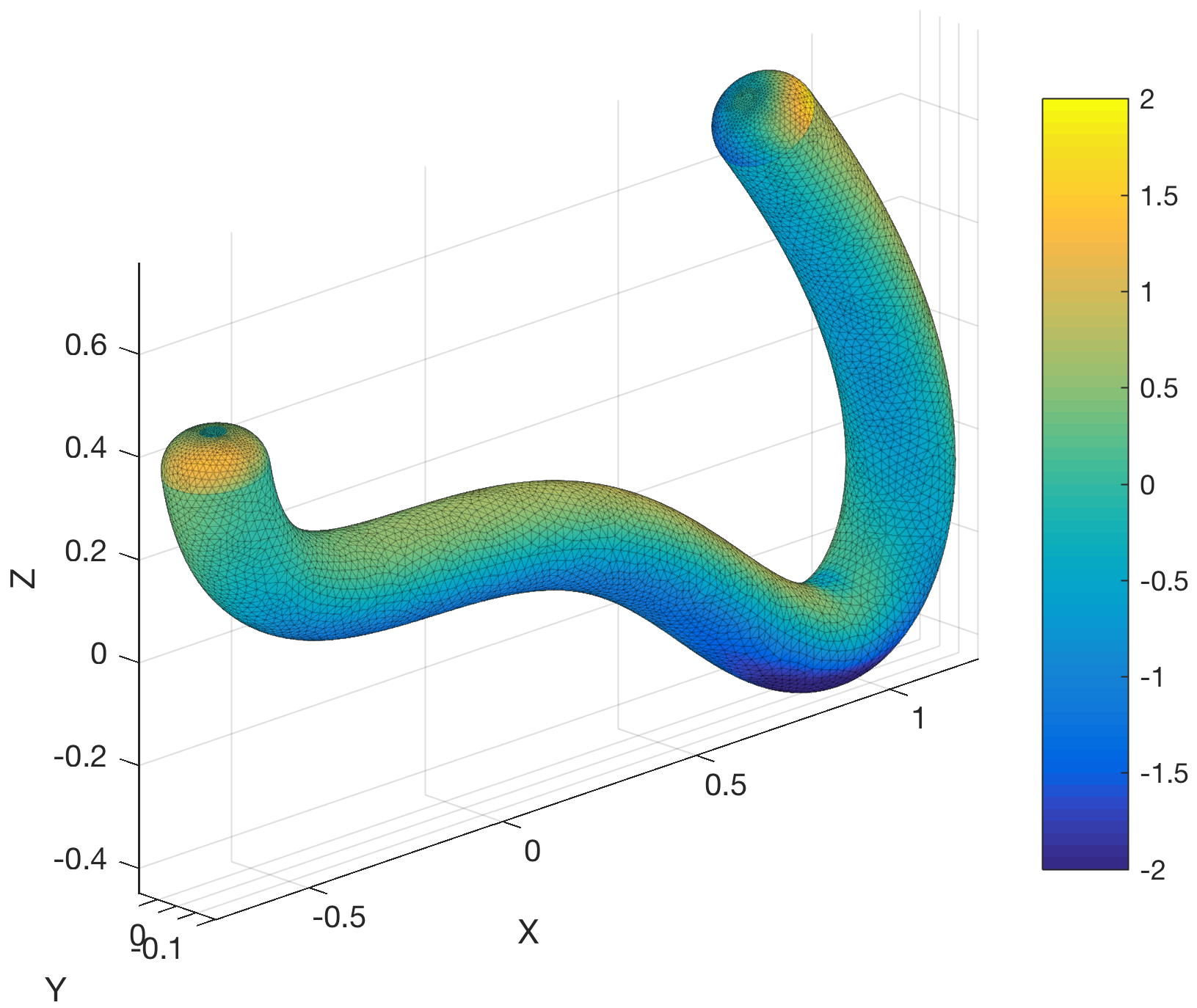}
      \caption{The right-hand side of $\surflap \phi = f$.}
      \label{fig_gmsh2_rhs}
    \end{subfigure}
    \quad
    \begin{subfigure}[b]{.45\linewidth}
      \centering
      \includegraphics[width=.95\linewidth]{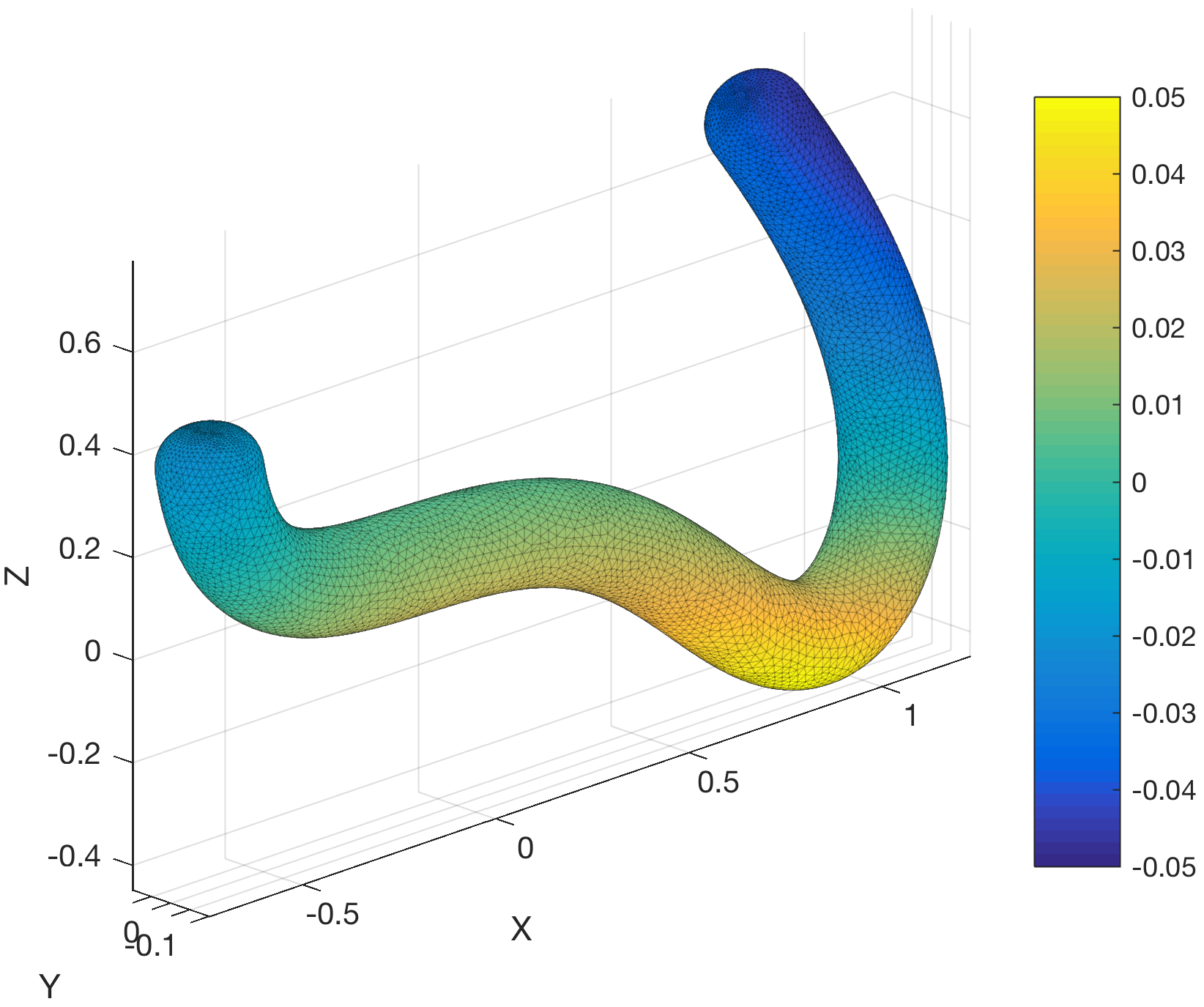}
      \caption{The solution of $\surflap \phi = f$.}
      \label{fig_gmsh2_sol}
    \end{subfigure}
    \caption{The right-hand side and solution to $\surflap \phi = f$
      along a 4th-order triangulated surface. The surface consisted of
      4604 triangles, was discretized using 4th-order interpolation
      points, resulting in 69,060 total discretization nodes. The solve
      resulted in an $L^2$ error of $1.3 \cdot 10^{-3}$.
      The mean of the solution
      was $-6.7\cdot 10^{-4}$.
      The solve required 18 GMRES iterations and achieved a
      relative residual of $1.6 \cdot 10^{-15}$.
      Note the different color scale between the images. }
    \label{fig_gmsh2}
  \end{center}
\end{figure}

Once again, we use a direct version of the solver 
describe in~\cite{bremer_2012c}. Self-interactions on triangular
patches are computed using the quadrature rules contained in the same
work, 
and non-self interactions are computed using adaptive quadrature with
a requested absolute tolerance of
$10^{-10}$. Figure~\ref{fig_torus_harm} shows the decomposition along
the torus given by
parameterization~\eqref{eq_torus}. Figure~\ref{fig_gmsh_harm} shows
the same decomposition along an arbitrary geometry constructed in
Autodesk Fusion 360, and meshed using Gmsh. Discretization information
and convergence results are
provided in the captions of each figure.
The $L^2$-norm $||\cdot ||_\Gamma$
of a vector field along $\Gamma$ is computed as:
\begin{equation}
  \begin{aligned}
    ||\bF||^2_\Gamma &=  \int_\Gamma |\bF|^2 \,  da   \\
    &\approx \sum_{j=1}^{\npts} w_j \, \bF^*_j \cdot \bF_j.
\end{aligned}
\end{equation}
As a proxy for convergence, since analytic Hodge
decompositions are not known except on trivial geometries,
we use the $L^2$ norm of the divergence and
curl of the computed harmonic component, $\bH$. Given point-wise values
of $\bH$ along $\Gamma$, denoted by $\bH_j$, it is simple to compute
$H^u_j$ and $H^v_j$, the coefficients with respect to the local basis:
\begin{equation}
  \bH_j = H^u_j \, \bx_u + H^v_j \, \bx_v.
\end{equation}
The surface divergence is then given by
\begin{equation}
  \surfdiv \bH = \frac{1}{\sqrt{|g|}} \left(
    \frac{\partial}{\partial u} \left( \sqrt{|g|} H^u \right)  +
    \frac{\partial}{\partial v}
    \left( \sqrt{|g|} H^v \right) \right) .
\end{equation}
Using the values $H^u_j$ and $H^v_j$ on each triangle,
it is possible to form the per-triangle interpolants
\begin{equation}
  \sqrt{|g|} \, H^u(u,v) = \sum_{m+n\leq p} a_{mn} \, K_{mn}(u,v), \qquad
  \sqrt{|g|} \, H^v(u,v) = \sum_{m+n\leq p} b_{mn} \, K_{mn}(u,v).
\end{equation}
Spectral differentiation can then be used to compute the partial
derivatives with respect to $u$ and $v$, and thereby enabling the
computation of $\surfdiv \bH$ (and of course, $\surfdiv \bn \times
\bH$) point by point.

\begin{figure}[!t]
  \begin{center}
    \begin{subfigure}[b]{.45\linewidth}
      \centering
      \includegraphics[width=.95\linewidth]{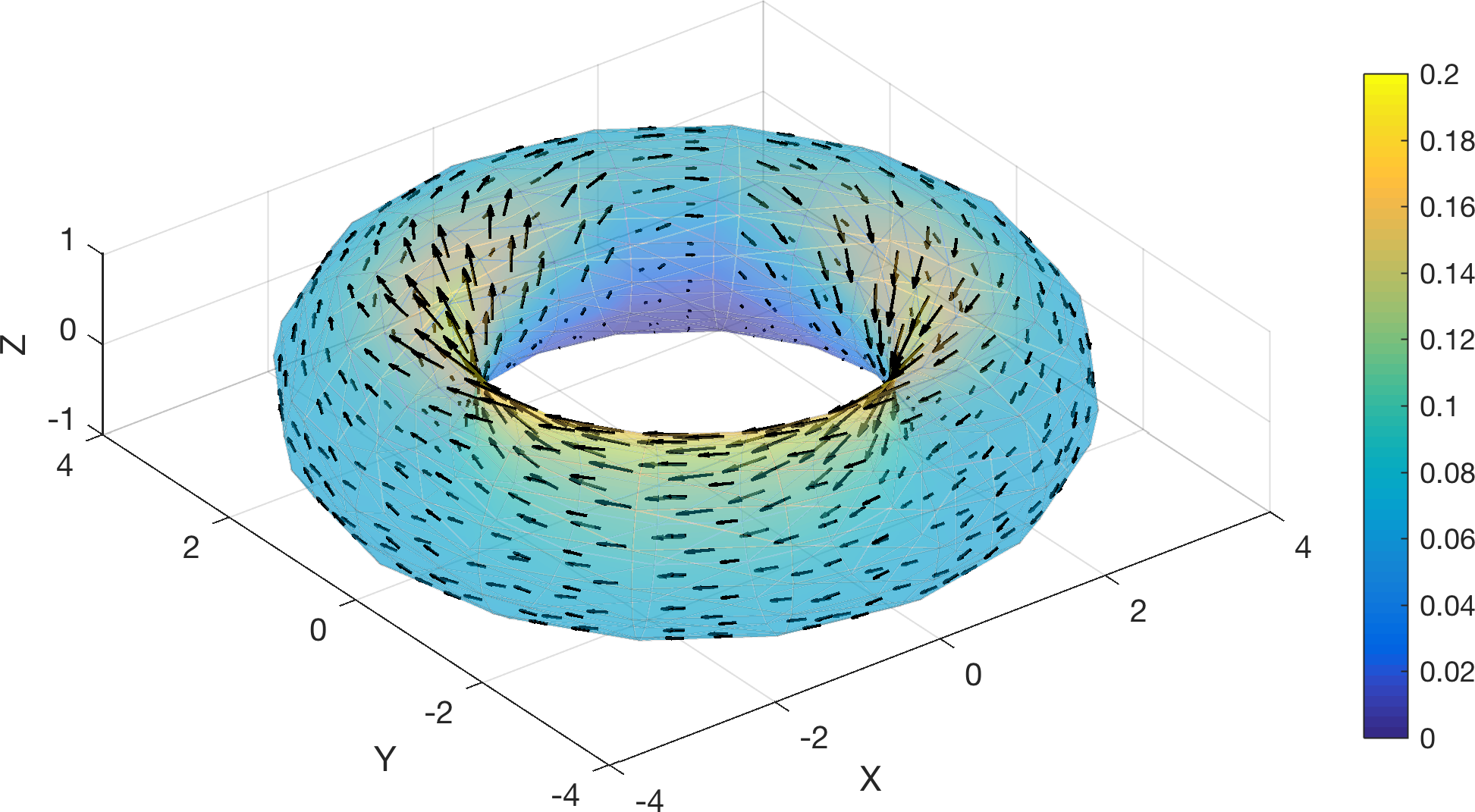}
      \caption{The tangential projection of $\bB$ onto the surface.}
      \label{fig_torus_biot}
    \end{subfigure}
    \quad
    \begin{subfigure}[b]{.45\linewidth}
      \centering
      \includegraphics[width=.95\linewidth]{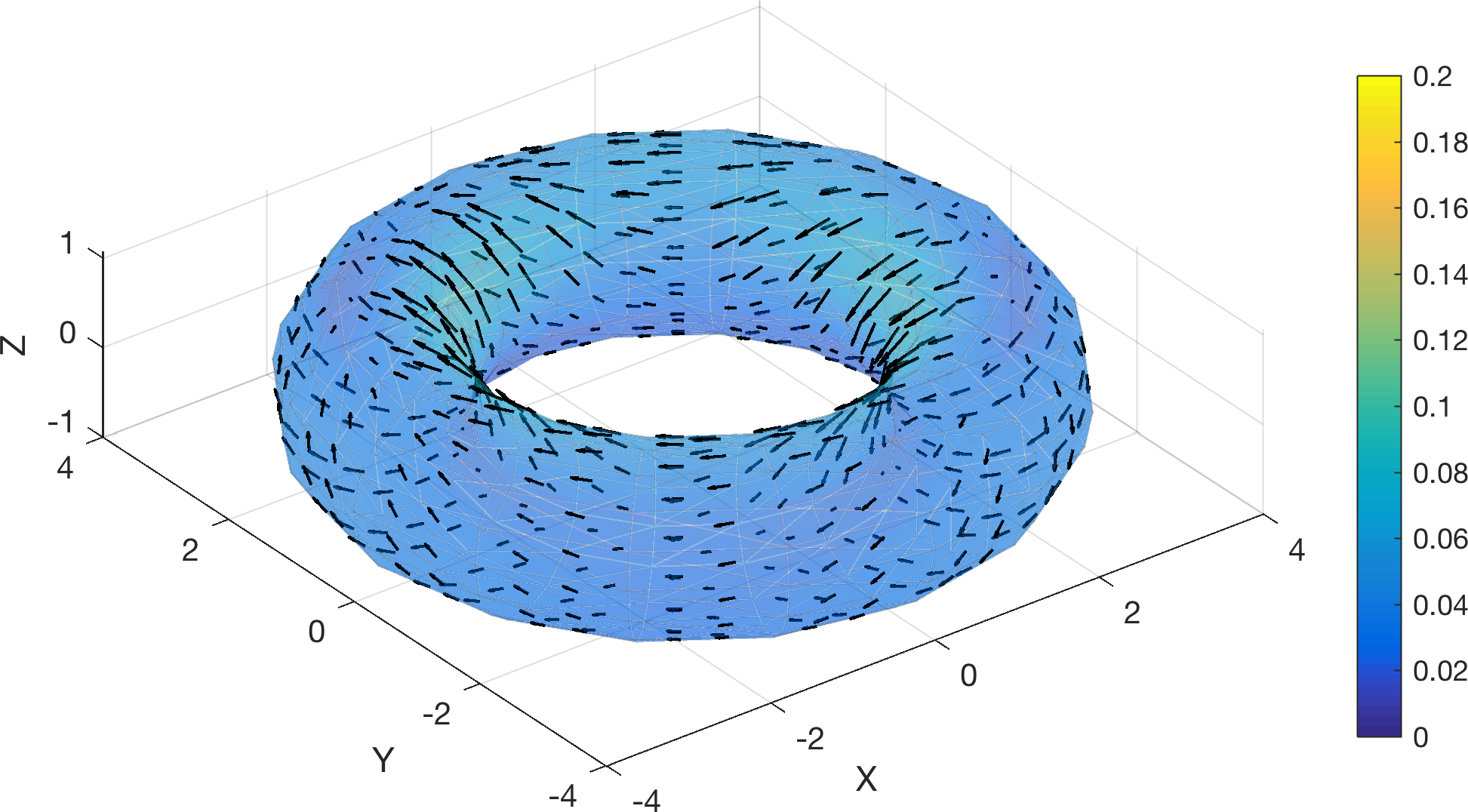}
      \caption{The curl-free component, $\surfgrad \alpha$.}
      \label{fig_torusa}
    \end{subfigure}
    \begin{subfigure}[b]{.45\linewidth}
      \centering
      \includegraphics[width=.95\linewidth]{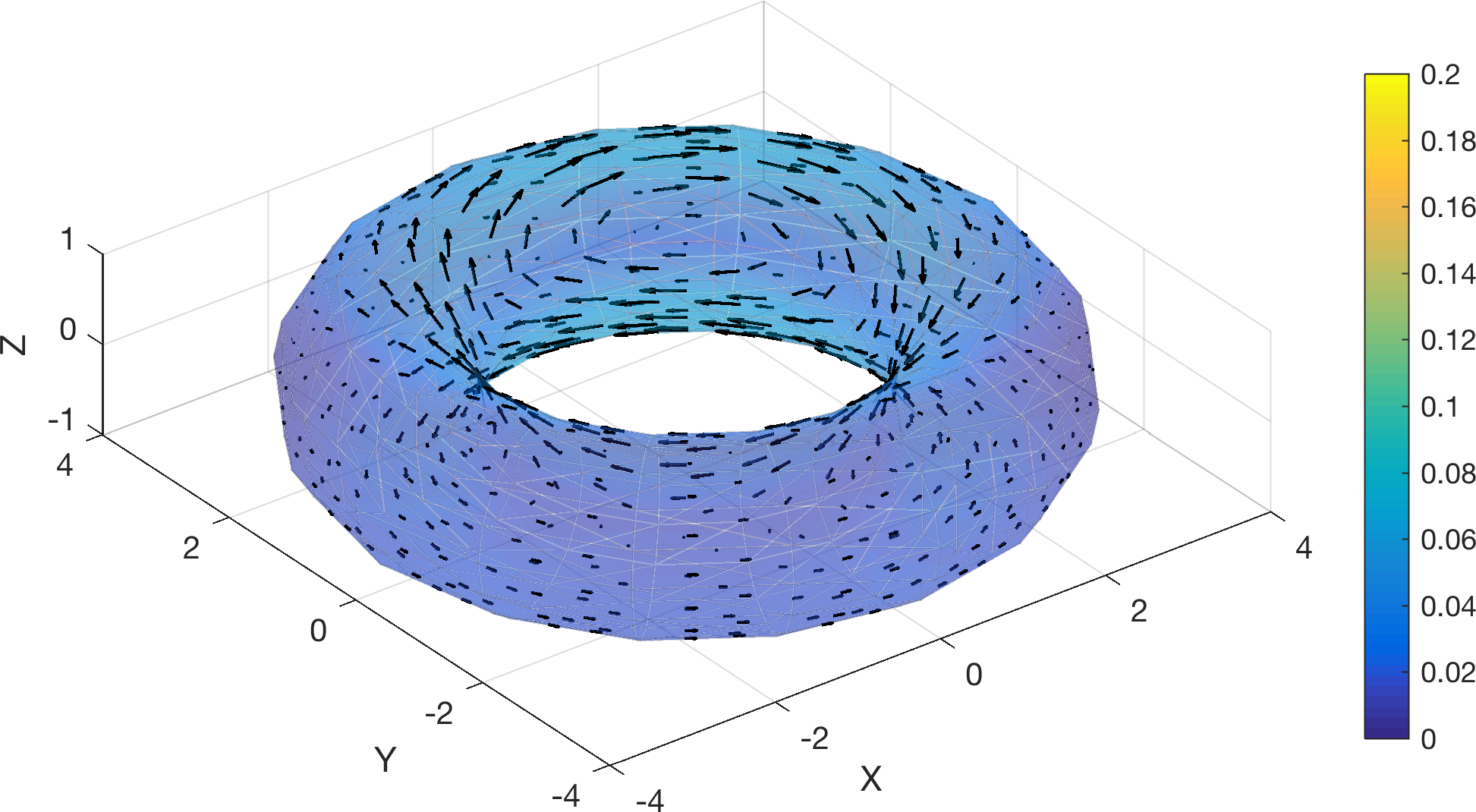}
      \caption{The divergence-free component, $\bn \times \surfgrad
        \beta$.}
      \label{fig_torusb}
    \end{subfigure}
    \quad
    \begin{subfigure}[b]{.45\linewidth}
      \centering
      \includegraphics[width=.95\linewidth]{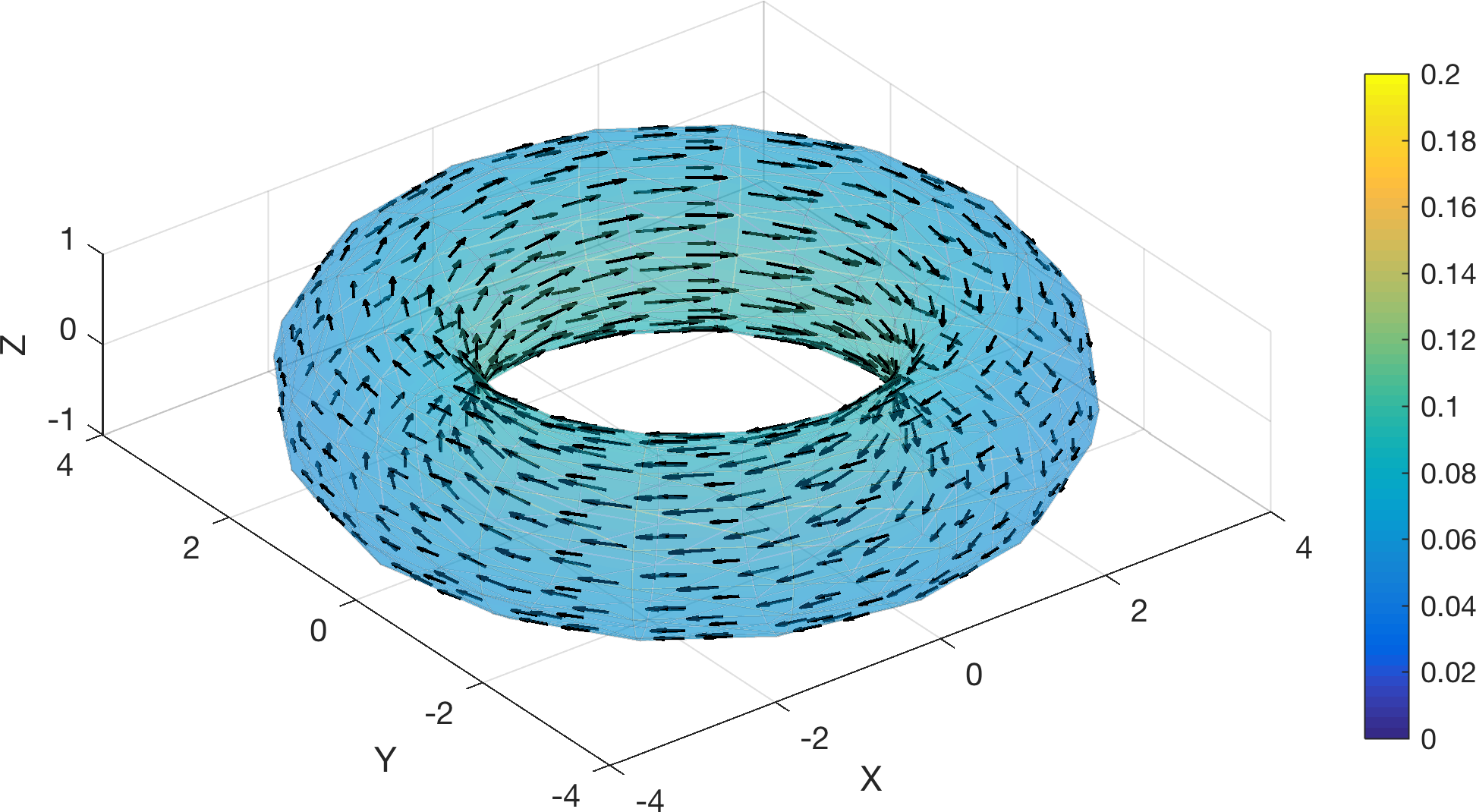}
      \caption{The harmonic component, $\bH$.}
      \label{fig_torush}
    \end{subfigure}
    \caption{The Hodge decomposition of the magnetic field computed
      using the Biot-Savart Law due to a current element located at
      (0.1, 0.2, 2.1) oriented in the direction (0.37, 0.48, -0.80). The
      strength was computed such that $||\bn \times \bB||_\Gamma = 1$.
      Shown is the direction and magnitude of the vector fields.
      The analytic parameterization of the torus was discretized using
      8th-order interpolation points on 128 triangles, yielding 5760
      discretization nodes. The linear systems were solved using GMRES
    and achieved relative residuals of less than $10^{-14}$. Using
    spectral differentiation on each triangle, $||\surfdiv
    \bH||_\Gamma = 5.9 \cdot 10^{-5}$ and $||\surfdiv \bn \times
    \bH||_\Gamma = 4.2 \cdot 10^{-5}$.}
    \label{fig_torus_harm}
  \end{center}
\end{figure}

\section{Conclusions and future directions}
\label{sec_conclusions}

In this work we have presented integral equations of the second-kind
which allow for the efficient solution to the Laplace-Beltrami problem
on arbitrary surfaces embedded in three dimensions. The integral
equations are a consequence of left- and right-preconditioners and
Calder\'on identities. A direct numerical solver was constructed to
demonstrate the usefulness of these integral equations, and high-order
accuracy was achieved for geometries which were analytically
parameterized. The numerical accuracy along lower-order triangulations
seems to be limited by inaccurate normal and curvature information,
which is a consequence of the CAD modeling or meshing
procedures. Obtaining more accurate surfaces, as well as annealing
non-smooth ones, is ongoing work.

It is worth addressing the somewhat erratic convergence behavior in
Tables~\ref{tab_ylm} and~\ref{tab_torconv}. The error in
any boundary integral equation method along surfaces comes from the
following sources: geometry approximation, singular quadrature,
nearly-singular/smooth quadrature, and the order of discretization of
the unknown. The overall convergence order is dictated by the lowest
order component of these factors. The discretization scheme upon which
this solver was based is that presented in~\cite{bremer_2012c}. The
convergence results in that work are similarly erratic, and provided
for a 12th order scheme~(see Table 2 in that work).
In theory, the scheme of this paper (and
of~\cite{bremer_2012c}) can be
made to be of arbitrarily high-order (assuming access to high-order
geometries is provided). The erratic convergence behavior
can only be attributed to the interplay between \emph{order of
  convergence} and \emph{actual accuracy obtained}. For higher-order
methods, it is often the case that the overall accuracy saturates
before the asymptotic convergence rate can be observed.

It should also be noted, that while the operator $\cS \surflap \cS$ is
equivalent to a second-kind integral operator given by
\begin{equation}
\cS \surflap \cS = -\frac{\cI}{4} +  \cD^2  - 
\cS(\cS'' + \cD') - 2\cS H \cS'  ,
\end{equation}
as shown in Lemma~\ref{lem_slaps}, it may be more computationally
efficient to iterate on the left-hand side operator composition
$\cS \surfdiv (\surfgrad \cS)$ instead of computing each of the three
compositions on the right-hand side separately.  Note that each term
on the right-hand side involves normal derivatives of layer
potentials, and each kernel is weakly singular. However, operators
such as $\cS \surfdiv$ and $\surfgrad \cS$ are of order zero and
have kernels which are
similar to Hilbert transforms along the surface. As a consequence,
a different set of singular quadratures than those used for the weakly
singular kernels are required.
 These quadratures were, in fact, computed
in~\cite{bremer_2013} but were not incorporated into the solver of
this paper. Exploring this iterative formulation is ongoing work, as well as
efficiently coupling these integral operators and curvilinear
triangulations with fast multipole methods for Laplace potentials in
three dimensions.

It is relatively straightforward to develop
similar integral formulations for other surface PDEs whose extensions
to the volume have known Green's functions, such as the
Yukawa-Beltrami equation~\cite{kropinski_2016_integral}.
For example, if $\cS_k$ denotes the single layer potential operator
for the Helmholtz equation, then
\begin{equation}
  \begin{aligned}
    \cS_k \left( \surflap + k^2 \right) \cS_k &=
    -2\cS_k  H \cS_k' - \cS_k \cS_k'' \\
    &= -2\cS_k  H \cS_k' - \cS_k \left( \cS_k'' + \cD' \right) +
    \cS_k\cD' \\
    &= -2\cS_k  H \cS_k' - \cS_k \left( \cS_k'' + \cD' \right) +
      \left( \cS_k - \cS \right) \cD' + \cS\cD' \\
      &= -\frac{1}{4}\cI + -2\cS_k  H \cS_k' -
      \cS_k \left( \cS_k'' + \cD' \right) +
      \left( \cS_k - \cS \right) \cD' + \cD^2.
  \end{aligned}
\end{equation}
The difference operators above for the mixed potentials can be shown
to be compact, resulting in a Fredholm integral equation of the second
kind.
Furthermore, combining the methods of this paper with the boundary
integral methods of~\cite{kropinski_2016_integral,kropinski_2014_fast}
would likely result in the ability to solve piece-wise
constant-coefficient related Beltrami problems on surfaces.

\begin{figure}[!t]
  \begin{center}
    \begin{subfigure}[b]{.45\linewidth}
      \centering
      \includegraphics[width=.95\linewidth]{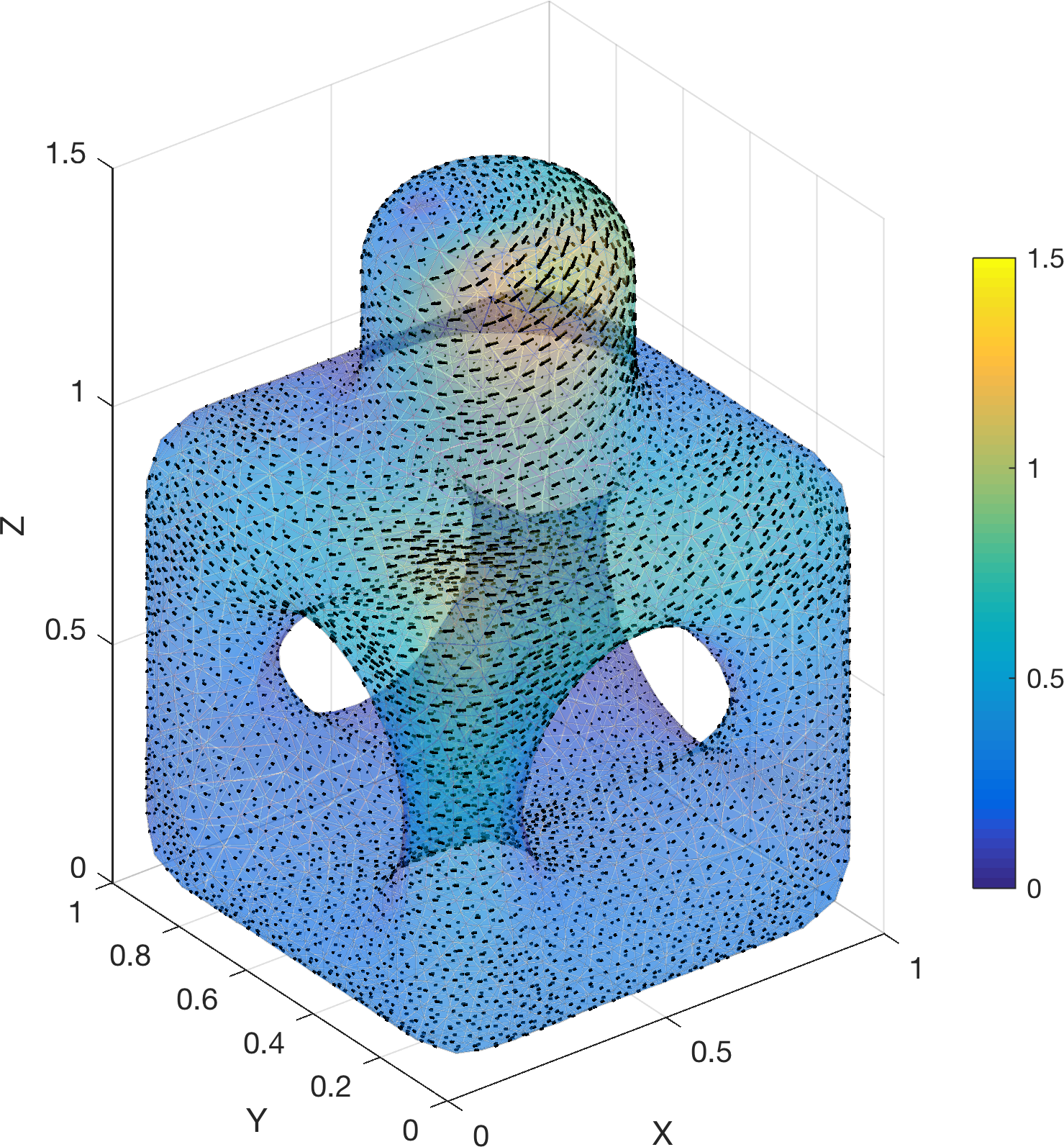}
      \caption{The tangential projection of $\bB$ onto the surface.}
      \label{fig_torus_biot}
    \end{subfigure}
    \quad
    \begin{subfigure}[b]{.45\linewidth}
      \centering
      \includegraphics[width=.95\linewidth]{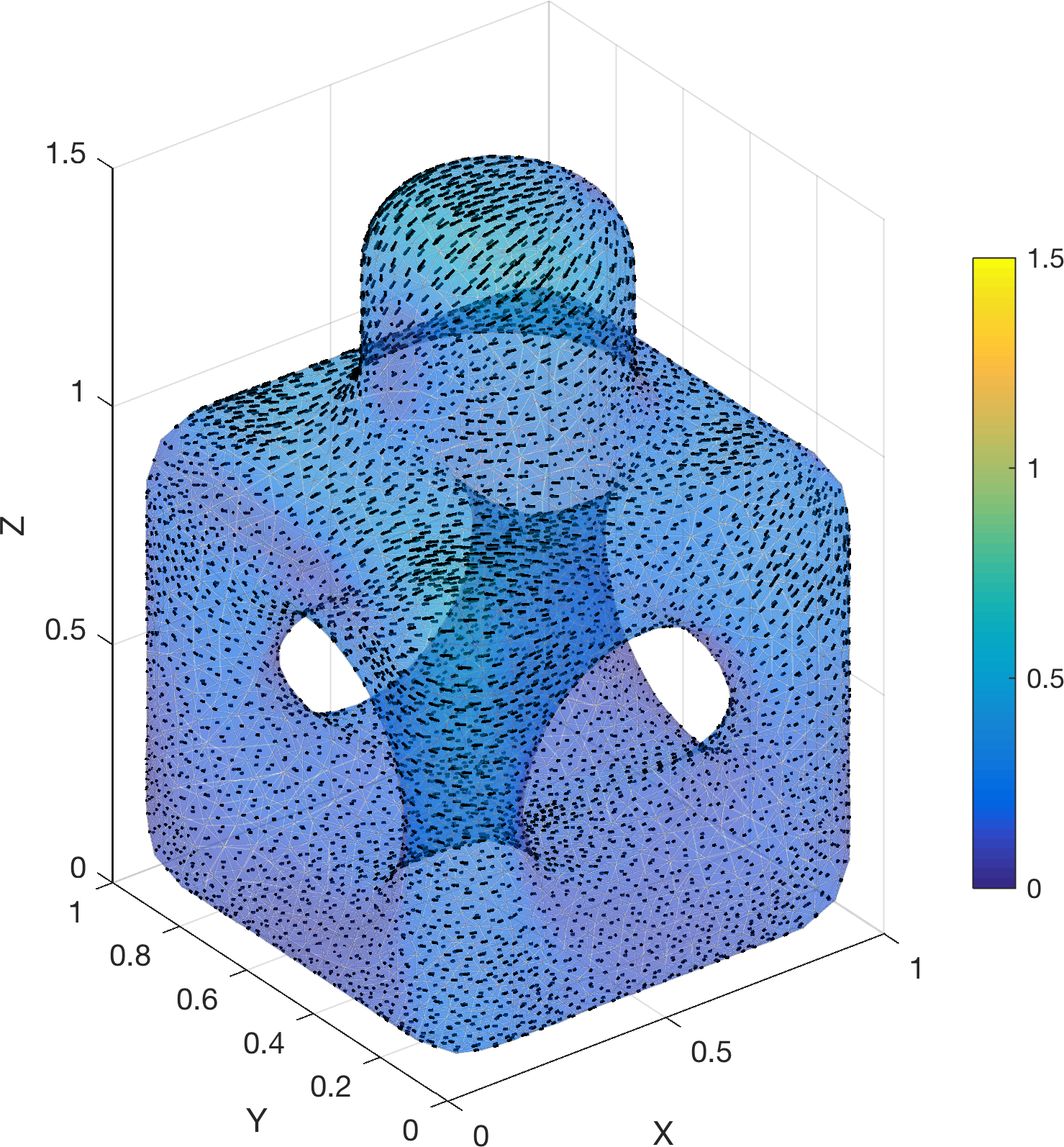}
      \caption{The curl-free component, $\surfgrad \alpha$.}
      \label{fig_torusa}
    \end{subfigure}
    \begin{subfigure}[b]{.45\linewidth}
      \centering
      \includegraphics[width=.95\linewidth]{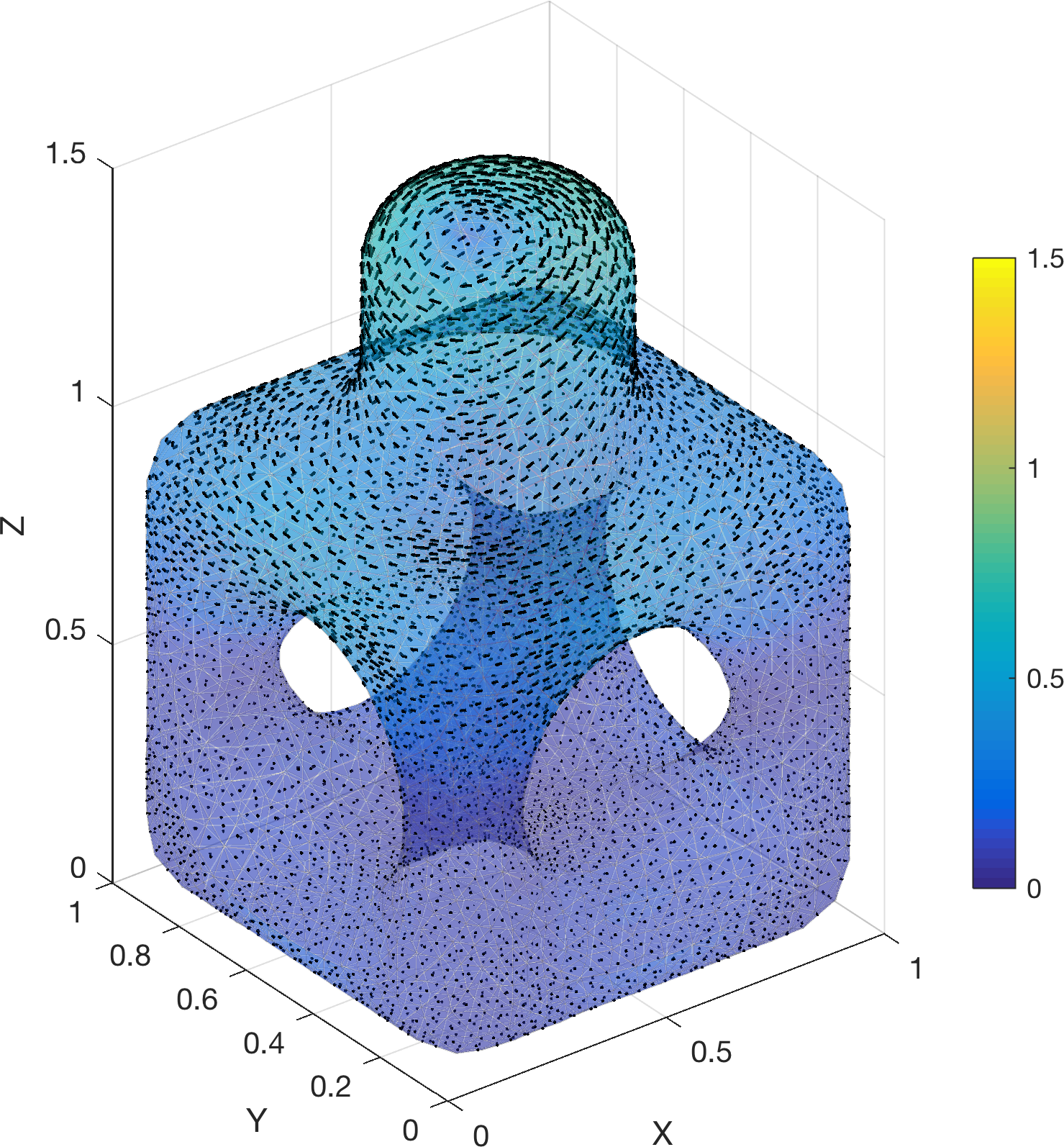}
      \caption{The divergence-free component, $\bn \times \surfgrad
        \beta$.}
      \label{fig_torusb}
    \end{subfigure}
    \quad
    \begin{subfigure}[b]{.45\linewidth}
      \centering
      \includegraphics[width=.95\linewidth]{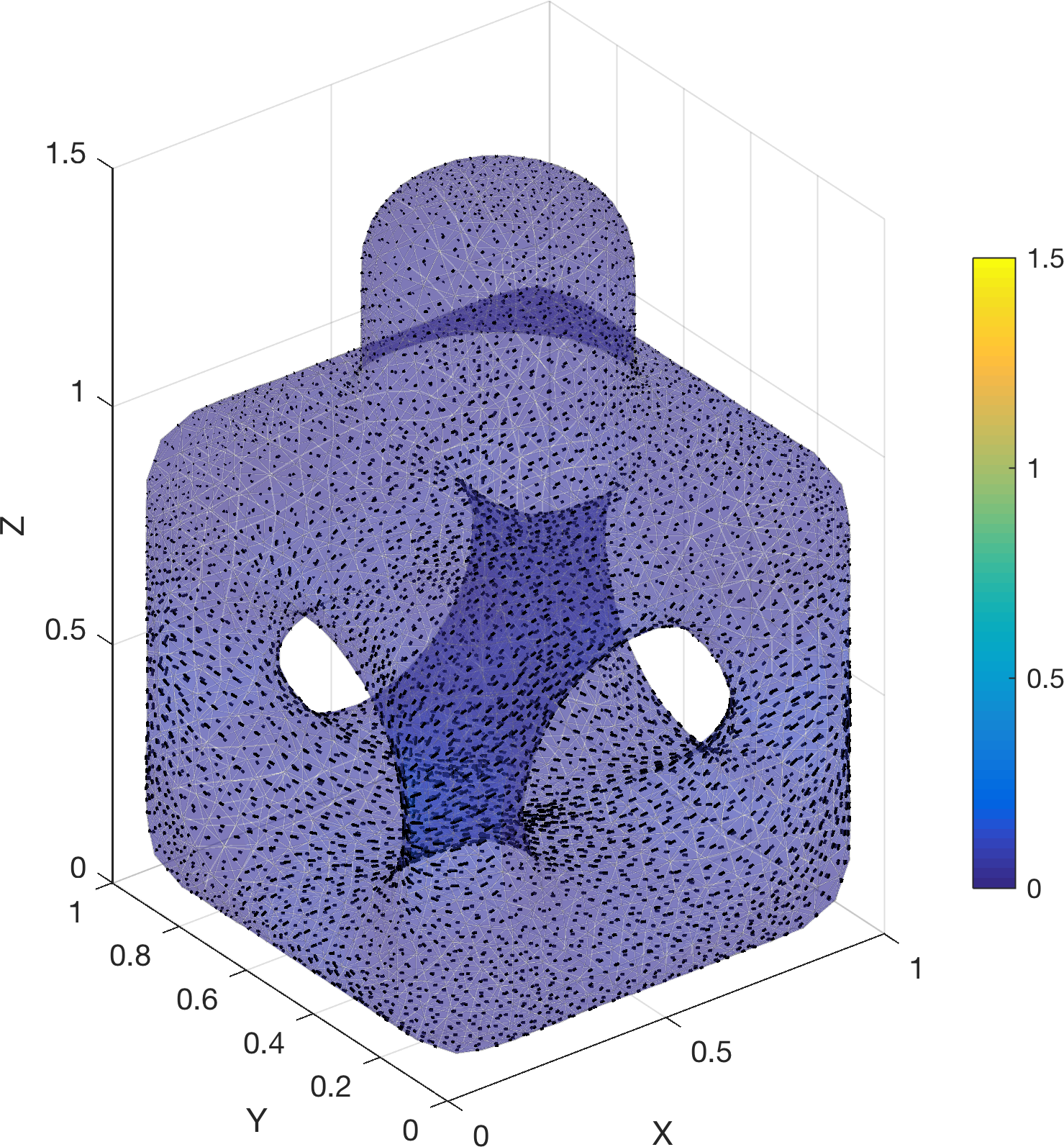}
      \caption{The harmonic component, $\bH$.}
      \label{fig_torush}
    \end{subfigure}
    \caption{The Hodge decomposition of the magnetic field computed
      using the Biot-Savart Law due to a current element located at
      (0.1, 0.2, 2.1) oriented in the direction (0.37, 0.48, -0.80). The
      strength was computed such that $||\bn \times \bB||_\Gamma = 1$.
      Shown is the direction and magnitude of the vector fields.  The
      geometry is given as 4th-order curvilinear triangles, as meshed
      by Gmsh from a \texttt{.step} file created in Autodesk Fusion
      360. The density was discretized using 8th-order interpolation
      points on 1412 triangles, yielding 63,540 discretization
      nodes. The linear systems were solved using GMRES and achieved
      relative residuals of less than $10^{-14}$. Using spectral
      differentiation on each triangle,
      $||\surfdiv \bH||_\Gamma = 8.9 \cdot 10^{-2}$ and
      $||\surfdiv \bn \times \bH||_\Gamma = 5.0 \cdot 10^{-1}$. Due to the
      conditioning of spectral differentiation and discontinuous
      curvature data from the meshing procedure, these convergence
      results are in-line with those in Figure~\ref{fig_gmsh1}.}
    \label{fig_gmsh_harm}
  \end{center}
\end{figure}

\subsection*{Acknowledgements}
\label{sec_ack}

The author would like to thank Jim Bremer and Zydrunas Gimbutas for
sharing generalized Gaussian quadrature routines, and Charles
L. Epstein, Leslie Greengard, Lise-Marie Imbert-G\'erard, and 
Tonatiuh Sanchez-Vizuet
for several useful discussions.

\bibliographystyle{abbrv}
\bibliography{master}

\end{document}